\newtheorem{thm}{Theorem}
\newtheorem{cor}{Corollary}
\newtheorem{prop}{Proposition}
\newtheorem*{thma}{Theorem A}
\theoremstyle{definition}
\date{}
\newcommand \Fc{\mathcal F}
\newcommand \Fcc{\mathcal F^2}
\newcommand \Ic{\mathcal I}
\newcommand{\parcv}[2]{\frac{\partial#1}{\partial#2}}
\newcommand{\pprc}[5]{\frac{\partial^2#1}{\partial#2^#3\partial#4^#5}}
\newcommand{\prc}[3]{\frac{\partial#1}{\partial#2^#3}}
\newcommand\RR{\mathbb R}
\newcommand{\en}{_{i=1}^n}
\newcommand\pa{\partial}
\newcommand\paa{\partial^2}
\newcommand\rank{\operatorname{rank}}
\begin{document}
\renewcommand*{\thefootnote}{\fnsymbol{footnote}}
\begin{center}
{\bf\Large ON PROJECTIVELY FLAT FINSLER SPACES}
\bigskip

T.~Q.~BINH$^1$ , D.~CS.~KERT\'ESZ$^{2,}$\footnote{The publication is supported by the T\'AMOP-4.2.2/B-10/1-2010-0024 project. The project is co-financed by the European Union and the European Social Fund.} and L.~TAM\'ASSY$^3$
\bigskip

{\small $^1$University of Debrecen, Institute of Mathematics,\\
H–-4010 Debrecen, P.O. Box 12, Hungary\\
e-mail: binh@science.unideb.hu\\
\bigskip

$^2$University of Debrecen, Institute of Mathematics,\\
H–-4010 Debrecen, P.O. Box 12, Hungary\\
e-mail: kerteszd@science.unideb.hu\\
\bigskip

$^3$University of Debrecen, Institute of Mathematics,\\
H–-4010 Debrecen, P.O. Box 12, Hungary\\
e-mail: tamassy@science.unideb.hu}
\end{center}

{\bf Abstract.}\quad First we present a short overview of the long history of projectively flat Finsler spaces. We give a simple and quite elementary proof of the already known condition for the projective flatness,
and we give a criterion for the projective flatness of the space (Theorem~\ref{thm:1}). After this we obtain a second-order PDE system, whose solvability is necessary and sufficient for a Finsler space to be projectively flat (Theorem~\ref{thm:2}). We also derive a condition in order that an infinitesimal transformation takes geodesics of a Finsler space into geodesics. This yields a Killing type vector field (Theorem~\ref{thm:3}). In the last section we present a characterization of the Finsler spaces which are projectively flat in a parameter-preserving manner (Theorem~\ref{thm:4}), and we show that these spaces over $\RR^n$ are exactly the Minkowski spaces (Theorem~\ref{thm:5} and \ref{thm:6}).\footnote{Keywords: Finsler spaces; projectively flat; projectively Euclidean\\
MSC(2000): 53B40, 53C60}

\newpage

\section{Introduction and historical overview}

Let $F^n=(M,\Fc)$ be a Finsler space over a connected $n$-dimensional base manifold $M$ with a regular, $1^+$-homogeneous and strongly convex Finsler metric (fundamental function, Finsler function) $\Fc$ \cite{BCS}. $F^n$ or $\Fc$ is said to be \emph{locally projectively flat} (\emph{locally projectively Euclidean} or, by M. Matsumoto's terminology, a \emph{Finsler space with rectilinear  extremals}) if there exists an atlas $(U_\alpha,\varphi_\alpha)_{\alpha\in \mathcal A}$ of $M$ satisfying the following condition: for any geodesic
$\gamma\colon I\to U_\alpha$
of $F^n$ there exists a strictly monotone smooth function $f\colon I\to \RR$, and two vectors $a,b\in \RR^n$, $a\ne 0$ such that
\begin{equation}
\varphi_\alpha(\gamma (t))=f(t)a+b,\quad t\in I. \tag{1a}\label{eq:1a}
\end{equation}
In this case the curve $\varphi_\alpha\circ\gamma$ is a straight line in the Euclidean space $\RR^n$, but it is not necessarily a geodesic, since its speed is not constant in general.

In this paper, our investigations are of purely local nature, so we drop the word `locally' and we say simply \emph{projectively flat}.
If, in particular, for any geodesic $\gamma$, we have $f(t)=t$ for all $t\in I$, i.e., the image curve $\varphi_\alpha\circ \gamma$ of $\gamma$ is the affine parametrization
\begin{equation}
t\in I \mapsto ta+b\in \varphi_\alpha(U_\alpha) \tag{1b} \label{eq:1b} \end{equation}
of a straight line segment in $\varphi_\alpha(U_\alpha)\subset \RR^n$, then we say that $F^n$ is projectively flat in a \emph{parameter-preserving manner}. In this case the image curve does remain a geodesic.
Previous investigations concerned mainly with the general case \eqref{eq:1a}, and less attention was paid to the special case \eqref{eq:1b}.

Although the question to find Lagrangians whose geodesics are straight lines  emerged already in the 19th century (raised by G.~Darboux), the real starting point of the investigations of projectively flat metrics is Hilbert's fourth problem \cite{Hi} raised on the International Congress of Mathematicians (Paris 1900), in which he asked about the spaces in which the shortest curves between any pair of points are straight line segments. The first answer was given by Hilbert's student G.~Hamel. In a 34 pages long paper he found necessary and sufficient conditions in order that a space satisfying an axiom system, which is a modification of Hilbert's system of axioms for Euclidean geometry, removing a strong congruence axiom and including the Desargues axiom, be projectively flat (\cite[esp.\ p.~250 (I,b)]{Ha}; see also \cite[p.~185]{S1}; \cite[p.~66]{B1}; or \cite{C}). Later E.~Bompiani (1924) showed that among the Riemannian manifolds exactly that of the constant curvature  are projectively flat.

The term `projective metric' was introduced by Busemann and Kelly \cite{BK}. Following them, a metric, i.e., a distance function $d\colon\RR^n\times \RR^n\to\RR^+$ is called to be projective if $d$  is continuous with respect to the canonical topology of $\RR^n$, and $d$ is `additive along the straight lines', i.e., $d(a,b)+d(b,c)=d(a,c)$, whenever $a$, $b$, $c$ are points on a straight line in the given order. In this more general context the `fourth problem requests, among many other  things, a method of construction for the cone of such projective metrics, and this task lies at the heart of the problem' (quoted verbatim from R. Alexander \cite{A}).

The first 75 years of study related to the question is summarized in a survey article of Busemann \cite{B2}. Obviously, the three classical geometries (Euclidean, hyperbolic, and elliptic) solve the problem over a suitable domain of $E^n$. Hilbert himself mentioned two solutions of non-Riemannian type: the Minkowski spaces (i.e., the finite dimensional Banach spaces), and a modification of the Cayley-Klein construction of hyperbolic geometry, now called Hilbert geometry. An interesting, non-symmetric projective metric was discovered by Funk in 1929 \cite{F}.

Differentiable distance functions supplemented with certain mild and natural additional conditions yield also Finsler metrics \cite{T}. It is well-known that a Finsler manifold with Hilbert, Funk or Klein metric is projectively flat (\cite[pp.~32--33]{S1}; \cite[pp.~105--106]{B1}).

\setcounter{equation}{1}

For general Finsler spaces A. Rapcs\'ak gave conditions for projective flatness (\cite{R}; or \cite[section 12.2]{S1}; see also \cite{C}, and \cite[Th.8.1]{Szi}). Both Hamel's and Rapcs\'ak's results say basically that the condition for the projective flatness is the existence of a local coordinate system $(x)$ on the base manifold in which the Finsler function satisfies the system of partial differential equations
\begin{equation}
\frac{\partial^2\Fc}{\partial x^k\partial y^i} y^k=\frac{\partial^2\Fc}{\partial x^i \partial y^k}y^k. \tag*{$(*)$} \label{eq:cs}\end{equation}
Actually, Rapcs\'ak gave conditions in order that two Finsler spaces $(M,\Fc)$ and $(M,\bar\Fc)$ admit projective mapping on each other. This condition is
\begin{equation}
\Fc_{|i}=\frac{\partial \Fc_{|k}}{\partial y^i} y^k, \label{eq:2}
\end{equation}
where $|$ means the horizontal Berwald derivative determined by $\bar \Fc$. If, in particular, $(M,\bar\Fc)$ is a Euclidean space in a Cartesian coordinate system, then \eqref{eq:2} reduces to \ref{eq:cs}.

For a recent account in the different formulations and coordinate free reformulation of Rapcs\'ak's equations we refer to \cite{Szi} and \cite{BSz}. In two dimensions, using an integral representation of the Finsler function, A.~V.~Pogorelov presented an elegant solution of Hamel's equation \cite{P}. He also showed that the smoothness of the Finsler function is not an essential condition, since any continuous projective Finsler function can be uniformly approximated by smooth projective Finsler functions solving Hilbert's problem on each compact subset. He gave a comprehensive solution of Hilbert's problem in~2 and~3 dimensions. However some important questions remained open. Among others, the following: how can the continuous Finsler functions be constructed in dimensions greater than two? The exciting questions arising from Pogorelov's investigations were discussed and exhaustively answered by Z.~I.~Szab\'o \cite{Sza}. Many other papers dealt with the problem from different aspects, as that of M. Matsumoto \cite{M}, \cite{MW}; X.~Mo, Z.~Shen and C.~Yang \cite{MSY}; Alvarez Paiva \cite{AP} etc. In the last time especially the projectively flatness of Randers and Einstein spaces came into the lime light. See T.~Q.~Binh and X.~Cheng~\cite{BC}, Z.~Shen, and C.~Yildrim \cite{SY}, B.~Li and Z.~Shen \cite{LS}, Z.~Shen \cite{S1,S2,S3}, X.~Cheng and M.~Li \cite{CL}.

Recently M.~Crampin \cite{C} introduced the notion of pseudo-Finsler spaces by a slight modification of the basic assumptions on the fundamental function. In a pseudo-Finsler space the positiveness of $\Fc$ is dropped, positive homogeneity is kept, and the positive definiteness of the Hessian $\frac{\partial^2\Fc^2}{\partial y^i\partial y^k}$ is replaced by the condition that the `reduced Hessian` of $\Fc$ defines at each point $v\in T_pM\setminus\{0_p\}$ a \emph{non-degenerate} symmetric bilinear form on the factor space $T_pM/\langle v\rangle$. Then
\begin{equation}
\rank \left(\frac{\pa^2\Fc}{\pa y^i\pa y^k}\right)=n-1. \label{eq:4}
\end{equation}
If $\Fc >0$ on $\mathring TM$, then a pseudo-Finsler function becomes a Finsler function (see Crampin \cite[Lemma 2]{C}). He also realized the importance of \eqref{eq:4} in the proof of \ref{eq:cs}.

In this paper first we want to present a simple proof of condition \ref{eq:cs} for the projective flatness. This proof is very near
to the known ones (Crampin, Shen, etc.), but it is quite short, and uses elementary considerations only. Starting with a smooth and $1^+$-homogeneous Lagrange function $L$, from the powerful condition of the non-vanishing of the Gaussian curvature $K$ of the level surfaces of $L$ we conclude relation \eqref{eq:4}. Thus $L$ becomes a Finsler function. Then from \eqref{eq:4} the projective flatness follows easily (Theorem~\ref{thm:1}). After this we obtain a second-order PDE system, whose solvability is necessary and sufficient  for the existence of a coordinate system $(x)$, in which \ref{eq:cs} is satisfied, that is for the projective flatness of $F^n$ (Theorem~\ref{thm:2}). Also in form of a PDE system we find necessary and sufficient conditions for the existence of a vector field $X$ on the base manifold, such that the local one-parameter group generated by $X$ takes Finsler geodesics as point sets into Finsler geodesics in the same sense (Theorem~\ref{thm:3}). Thus, this vector field $X$ is an analogue of a Killing vector field, in the case when the transformations take geodesics into geodesics as point sets. If both the starting and image curves are geodesics (as parameterized curves) then the one-parameter group consists of affine transformations.
The obtained characterization of these spaces is very similar to \ref{eq:cs} (Theorem~\ref{thm:4}). Finally we show that a Finsler function $\Fc$ over $\RR^n$ is projectively flat in a parameter-preserving manner, if and only if $(\RR^n,\Fc)$ is a Minkowski space (Theorem~\ref{thm:5} and \ref{thm:6}).

\section{Projectively flat Finsler spaces}

As we have mentioned in the introduction, our considerations are purely local. So we assume in most occasions, that our base manifold $M$ admits a global chart $u=(u^1,\dots,u^n)$. Then on the total manifold of the tangent bundle $\tau\colon TM\to M$, we have the induced global chart $(x,y)=((x^i)_{i=1}^n,(y^i)_{i=1}^n)$, where
\[
x^i:=u^i\circ\tau,\quad y^i(v):=v(u^i);\quad v\in\ TM,\ i\in\{1,\dots,n\}.
\]
The coordinate vector fields $\parcv{}{u^i}$ form a (global) frame field of $TM$, while $\left(\parcv{}{x^i},\parcv{}{y^i}\right)_{i=1}^n$ is a frame field of $TTM$. We use the notation $f^{\mathsf v}:=f\circ\tau$, for the vertical lift of a function $f$ on $M$.

{\bf 1.} We summarize some basic facts concerning Finsler functions and their geodesics. Let $\gamma\colon[t_1,t_2]\to M$ be a curve of class $C^2$ in $M$ with component functions $\gamma^i:=u^i\circ\gamma$. Then
\begin{equation}\label{eq:crvcoord}
\dot\gamma=(\gamma^i)'\left(\prc{}ui\circ\gamma\right)\mbox{ and } \ddot\gamma=(\gamma^i)'\left(\prc{}xi\circ\dot\gamma\right)+(\gamma^i)''\left(\prc{}yi\circ\dot\gamma\right).
\end{equation}
are the velocity and the acceleration vector fields of $\gamma$, respectively. We have the obvious relations
\begin{equation}\label{eq:crvcomp}
  x^i\circ\dot\gamma=\gamma^i,\quad y^i\circ\dot\gamma=(\gamma^i)'.
\end{equation}

Now suppose that a Finsler function $\Fc\colon TM\to\RR$ is specified for $M$. The \emph{Finsler length} of a $C^1$ curve $\gamma\colon[t_1,t_2]\to M$ is
\[
\mathbb F(\gamma):=\int_{t_1}^{t_2} F\circ\dot\gamma=\int_{t_1}^{t_2} F(\dot\gamma(t))\,dt,
\]
where, owing to the positive homogeneity of $\Fc$, the integral is invariant under positive reparametrization. By a \emph{geodesic} of $\Fc$ we mean an extremal of the functional $\mathbb F$ which has positive constant speed. A curve $\gamma\colon[t_1,t_2]\to M$ of class $C^2$ is an extremal of $\mathbb F$ if, and only if, it is a solution of the Euler\,--\,Lagrange equations $\prc\Fc xi-\frac d{dt}\prc\Fc yi=0$, i.e.,
\begin{equation}\label{eq:Eul}
  \prc\Fc xi\circ\dot\gamma-\left(\prc\Fc yi\circ\dot\gamma\right)'=0.
\end{equation}
Since $\Fc$ is positive-homogeneous of degree $1$, we have $\prc\Fc xi=\pprc\Fc ykxi y^k$. So it follows that relation \eqref{eq:Eul} is equivalent to
\[
\left(\pprc\Fc ykxi\circ\dot\gamma\right)(\gamma^k)'-\left(\pprc\Fc xkyi\circ\dot\gamma\right)(\gamma^k)'-\left(\pprc{\Fc} ykyi\circ\dot\gamma\right)(\gamma^k)''=0
\]
or (applying the second relation of \eqref{eq:crvcomp}) to
\begin{equation}\label{eq:Eul2}
  \left(\pprc\Fc ykxi y^k\right)\circ\dot\gamma-\left(\pprc\Fc xkyi y^k\right)\circ\dot\gamma-\left(\pprc{\Fc} ykyi\circ\dot\gamma\right)(\gamma^k)''=0.
\end{equation}

{\bf 2.} Now we give a simple proof of Hamel's following well-known theorem \cite{Ha}. For other approaches and proofs, see  \cite{C,R,S1,Szi}.

\begin{thma}
Let $M$ be a manifold which admits an atlas consisting of a single chart, and let $\Fc\colon TM\to M$ be a Finsler function. The Finsler space $(M,\Fc)$ is projectively flat if, and only if, there exists a coordinate system $(u^i)\en$ on $M$ with induced coordinate system $((x^i)\en,(y^i)\en)$ on $TM$ such that the equations

\begin{equation}\label{eq:Ham}
\pprc\Fc xkyi y^k=\pprc\Fc xiyk y^k
\end{equation}
are satisfied.
\end{thma}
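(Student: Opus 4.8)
The plan is to read off both implications directly from the Euler--Lagrange equation in the expanded form \eqref{eq:Eul2}, using only two consequences of the positive $1$-homogeneity of $\Fc$: first, that $\prc\Fc xi=\pprc\Fc ykxi y^k$ (already used above), and second, that $\prc\Fc yi$ is $0$-homogeneous, so that $\pprc\Fc yiyk y^k=0$. This second identity is what makes the second-order term of \eqref{eq:Eul2} disappear along straight lines, and it is the engine of the whole argument.

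For the direction \emph{projectively flat $\Rightarrow$ \eqref{eq:Ham}}, I would fix the chart in which \eqref{eq:1a} holds and argue pointwise. Given $p\in M$ and $v\in T_pM\setminus\{0\}$, take the constant-speed geodesic $\gamma$ with $\dot\gamma(0)=v$; by \eqref{eq:1a} its image is a straight segment, so that $\gamma^i=f(t)\,a^i+b^i$ and $(\gamma^i)'=f'(t)\,a^i$. Since $\Fc(\dot\gamma)=f'\Fc(a)$ is constant (constant speed) and $\Fc(a)\neq0$, the factor $f'$ is constant and hence $(\gamma^i)''=0$. Substituting $(\gamma^k)''=0$ into \eqref{eq:Eul2} leaves $\bigl(\pprc\Fc ykxi y^k-\pprc\Fc xkyi y^k\bigr)\circ\dot\gamma=0$ at $v$; as $p$ and $v$ are arbitrary and mixed partials commute, this is precisely \eqref{eq:Ham} on $\mathring{TM}$, and then on all of $TM$ by continuity. (In fact constant speed is not even needed: for \emph{any} parametrization of a straight segment the acceleration is proportional to the velocity, and the resulting term $\bigl(\pprc\Fc ykyi y^k\bigr)\circ\dot\gamma$ still vanishes by the homogeneity identity above.)

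For the converse, I would assume \eqref{eq:Ham} in some chart and let $\gamma$ be an arbitrary geodesic, so that \eqref{eq:Eul2} holds. By \eqref{eq:Ham} the first two terms cancel and we are left with $\bigl(\pprc\Fc ykyi\circ\dot\gamma\bigr)(\gamma^k)''=0$ for every $i$; that is, the acceleration $\bigl((\gamma^k)''\bigr)$ lies in the kernel of the fundamental Hessian $\bigl(\pprc\Fc yiyk\bigr)$ taken along $\dot\gamma$. The \emph{crux} is to identify this kernel: invoking the rank condition \eqref{eq:4}, strong convexity gives $\rank\bigl(\pprc\Fc yiyk\bigr)=n-1$, while homogeneity gives $\pprc\Fc yiyk y^k=0$, so the kernel is exactly the line $\RR\,y$ spanned by the velocity. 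Hence $(\gamma^k)''=\lambda(t)(\gamma^k)'$ for some scalar function $\lambda$; integrating this linear system yields $(\gamma^k)'=g(t)\,a^k$ with $g>0$, and therefore $\gamma^k=f(t)\,a^k+b^k$ with $f'=g$ strictly monotone and $a\neq0$. This is \eqref{eq:1a}, so every geodesic is a straight segment and $(M,\Fc)$ is projectively flat.

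I expect the only genuinely delicate point to be this kernel computation in the converse: the reduction from ``acceleration in $\ker\bigl(\pprc\Fc yiyk\bigr)$'' to ``acceleration parallel to the velocity'' rests on the Hessian having rank exactly $n-1$, which is where strong convexity, through \eqref{eq:4}, is indispensable; dropping it would leave the parametrized shape of the geodesic undetermined. The forward direction, by contrast, is essentially automatic once one observes that $\pprc\Fc yiyk y^k=0$ annihilates the second-order term of \eqref{eq:Eul2}.
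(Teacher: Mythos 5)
Your proposal is correct in substance, but the \emph{primary} argument you give for the necessity direction is wrong and only your parenthetical remark rescues it, while your sufficiency direction is correct and takes a genuinely different route from the paper's.

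The error in necessity: you claim that since the geodesic has constant speed and $\Fc(\dot\gamma)=f'\Fc(a)$ with $\Fc(a)\neq 0$, the factor $f'$ must be constant, whence $(\gamma^i)''=0$. But $\Fc$ depends on the base point as well as on the vector: in two-argument notation, $\Fc(\dot\gamma(t))=f'(t)\,\Fc(\gamma(t),a)$, and the factor $\Fc(\gamma(t),a)$ is in general \emph{not} constant along $\gamma$, so constancy of the product does not force $f'$ to be constant. Treating $\Fc(a)$ as a constant amounts to assuming $\Fc$ is position-independent, i.e.\ Minkowskian. Concretely, in the Funk and Hilbert geometries (projectively flat, non-Minkowskian) constant-speed geodesics trace straight lines with non-affine parametrization; and if your claim were correct, every projectively flat Finsler space would be projectively flat in a parameter-preserving manner, so by Theorem~\ref{thm:5} every projectively flat $(\RR^n,\Fc)$ would be Minkowskian --- which is false. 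Your parenthetical observation is the correct argument, and it is exactly the paper's: for \emph{any} parametrization of a straight segment, $(\gamma^i)''=\frac{f''}{f'}(\gamma^i)'$, so the second-order term of \eqref{eq:Eul2} equals $\frac{f''}{f'}\bigl(\pprc\Fc ykyi y^k\bigr)\circ\dot\gamma=0$ by the $0^+$-homogeneity of $\prc\Fc yi$. Promote that to the main argument and delete the constant-speed claim.

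Your converse is a valid alternative to the paper's. The paper argues by construction plus uniqueness: under \eqref{eq:Ham} the Euler--Lagrange equations reduce to $\bigl(\pprc\Fc ykyi\circ\dot\gamma\bigr)(\gamma^k)''=0$, so every straight line $u\circ\gamma(t)=t\,y(v)+u(p)$ is an extremal; reparametrizing it to constant speed gives a geodesic with arbitrary prescribed initial data $(p,v)$, and since a geodesic is uniquely determined by its initial data, \emph{all} geodesics have the form \eqref{eq:line}. You instead take an arbitrary geodesic and show directly that it traces a straight line: the reduced equation places the acceleration in $\ker\bigl(\pprc\Fc yiyk\bigr)$ along $\dot\gamma$; the rank condition \eqref{eq:4} (valid for Finsler functions --- this is precisely Crampin's observation, and it does require justification) together with $\pprc\Fc yiyk y^k=0$ identifies that kernel as $\RR\,\dot\gamma$; integrating $(\gamma^k)''=\lambda(t)(\gamma^k)'$ then yields $\gamma^k=f(t)a^k+b^k$ with $f'>0$. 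Your route makes explicit where strong convexity enters and avoids any appeal to the uniqueness theorem for geodesics; the paper's route needs only the trivial solution $(\gamma^k)''=0$ of the reduced equation plus standard existence-uniqueness, and so never has to compute the kernel of the degenerate Hessian of $\Fc$. Both are sound.
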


\begin{proof}
\emph{Necessity.} Suppose that $(M,\Fc)$ is projectively flat, i.e., there exists a coordinate system $u=(u^i)\en$ on $M$ such that for every geodesic $\gamma\colon[t_1,t_2]\to M$ of $\Fc$ we have
\begin{equation}\label{eq:line}
u\circ\gamma(t)=f(t)a+b\mbox{ for all } t\in[t_1,t_2],
\end{equation}
where $f\colon[t_1,t_2]\to\RR$ is a $C^2$ function with positive derivative; $a,b \in\RR^n$, $a\neq0$ (these data depend, of course, on $\gamma$). Then
\[
\gamma^i(t)=f(t)a^i+b^i,\quad (\gamma^i)'(t)=f'(t)a^i,\quad(\gamma^i)''(t)=f''(t)a^i,
\]
therefore $(\gamma^i)''=\frac{f''}{f'}(\gamma^i)'$. Thus the last term on the left-hand side of \eqref{eq:Eul2} can be manipulated as follows:
\[
\left(\pprc{\Fc} ykyi\circ\dot\gamma\right)(\gamma^k)''=\frac{f''}{f'}\left(\pprc{\Fc} ykyi\circ\dot\gamma\right)(\gamma^k)'\overset{\eqref{eq:crvcomp}}= \frac{f''}{f'}\left(\pprc{\Fc} ykyi y^k\right)\circ\dot\gamma=0,
\]
taking into account the $0^+$-homogeneity of $\prc\Fc yi$. So \eqref{eq:Eul2} takes the simpler form
\[
\left(\pprc\Fc xkyi y^k\right)\circ\dot\gamma-\left(\pprc\Fc xiyk y^k\right)\circ\dot\gamma=0,
\]
which implies \eqref{eq:Ham}.
\medskip

\noindent\emph{Sufficiency.} If the Hamel equations are satisfied, then the Euler\,--\,Lagrange equations reduce substantially, and hence a $C^2$ curve $\gamma\colon[t_1,t_2]\to M$ is an extremal of $\mathbb F$ if, and only if,
\begin{equation}
\left(\pprc{\Fc} ykyi\circ\dot\gamma\right){\gamma^k}''=0.
\end{equation}
Thus, if we choose a point $p\in M$ and a vector $v\in T_pM\setminus\{0\}$, the curve $\gamma$ in $M$ given by
\[
u\circ\gamma(t)=t\,y(v)+u(p)
\]
is an extremal of $\mathbb F$ with initial velocity $\dot\gamma(0)=v$. If $\tilde\gamma$ is a reparameterization of $\gamma$  of constant speed $F(v)$, then $\tilde\gamma$ becomes a geodesic with initial velocity $v$, and still of the form \eqref{eq:line}.
  Since in a Finsler space there exists exactly one geodesic with given initial data, all geodesics are of the form \eqref{eq:line}, hence $(M,\Fc)$ is projectively flat.
\end{proof}

It follows that a coordinate system is \emph{geodesically linear}, or \emph{rectilinear}, if and only if, \eqref{eq:Ham} holds.

\bigskip

{\bf 3.} Now we investigate $1^+$-homogeneous Lagrange functions, whose level surfaces have non-vanishing Gaussian curvature.

\begin{thm}\label{thm:1}
Let $M$ be a manifold with an atlas consisting of a single chart. Let a positive-homogeneous Lagrange function $L$ on $TM$ be given, such that it is smooth and positive on $\mathring TM$ and its level surfaces have non-vanishing Gaussian curvature. Then $L$ is a Finsler function, and the Finsler manifold $(M,L)$ is projectively flat if, and only if, there exists a coordinate system $(u^i)\en$ on $M$ with induced coordinate system $((x^i)\en,(y^i)\en)$ on $TM$, such that
\begin{equation}
\frac{\pa^2 L}{\pa y^k\pa x^i}y^k=\frac{\pa^2L}{\pa y^i\pa x^k} y^k. \label{eq:12}
\end{equation}
\end{thm}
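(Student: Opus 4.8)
The plan is to reduce the projective-flatness equivalence to the already-established Theorem~A, so that the only genuinely new content is the assertion that the Lagrange function $L$ is in fact a Finsler function. Indeed, once $L$ is known to be a Finsler function, Theorem~A applied to $L$ says that $(M,L)$ is projectively flat if and only if there is a chart in which the Hamel equations \eqref{eq:Ham} hold for $L$; since mixed second partials commute, those are literally equation \eqref{eq:12}. Thus the whole weight of the theorem rests on extracting, from the non-vanishing of the Gaussian curvature of the level surfaces, the rank condition \eqref{eq:4}, after which Crampin's Lemma~2 (quoted in the introduction) upgrades the positive, $1^+$-homogeneous Lagrangian $L$ to a Finsler function.

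To obtain \eqref{eq:4} I would first record the consequence of homogeneity: differentiating Euler's relation $\prc{L}{y}{i}y^i=L$ once more in $y$ gives $\pprc{L}{y}{i}{y}{j}\,y^j=0$, so the position vector $y$ lies in the kernel of the vertical Hessian $H:=\left(\pprc{L}{y}{i}{y}{j}\right)$ at every point, and hence $\rank H\le n-1$ automatically. It remains to show that equality holds exactly when the Gaussian curvature does not vanish. I would fix $p\in M$ and view the level surface $S_c=\{v\in T_pM : L(v)=c\}$ as a hypersurface of the Euclidean space $T_pM\cong\RR^n$, with unit normal proportional to the gradient $\left(\prc{L}{y}{i}\right)=:\nabla_yL$. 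By Euler's relation $\nabla_yL\cdot y=L=c\neq0$, so $y$ is transverse to $S_c$, and $T_vS_c\oplus\langle y\rangle=\RR^n$ while $T_vS_c=(\nabla_yL)^{\perp}$. The standard formula for the second fundamental form of a level surface reads $II(X,Y)=|\nabla_yL|^{-1}H(X,Y)$ for $X,Y\in T_vS_c$, so the Gauss--Kronecker curvature $K=\det(II)/\det(g)$ is non-zero precisely when the restriction $H|_{T_vS_c}$ is non-degenerate.

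The remaining point is a short linear-algebra equivalence: $H|_{T_vS_c}$ is non-degenerate if and only if $\rank H=n-1$. For one direction, if $\ker H=\langle y\rangle$ and $w\in T_vS_c$ satisfies $H(w,w')=0$ for all $w'\in T_vS_c$, then $Hw\perp T_vS_c$, so $Hw\in\langle\nabla_yL\rangle$; but $Hw\cdot y=w\cdot Hy=0$ by symmetry and $Hy=0$, and $\nabla_yL\cdot y=c\neq0$ force $Hw=0$, whence $w\in\langle y\rangle\cap T_vS_c=\{0\}$. Conversely, if $H|_{T_vS_c}$ is non-degenerate and $z=w+\lambda y\in\ker H$ with $w\in T_vS_c$, then $Hw=Hz=0$ gives $w=0$, so $\ker H=\langle y\rangle$. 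Combined with the previous paragraph this is exactly \eqref{eq:4}; Crampin's Lemma~2 then yields that $L$ is a Finsler function, and applying Theorem~A finishes the proof.

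The step I expect to be the real obstacle is the one I am delegating to Crampin's Lemma~2, namely the passage from the mere non-degeneracy furnished by \eqref{eq:4} to the positive-definiteness of $\pprc{L^2}{y}{i}{y}{j}$ demanded of a Finsler metric. A self-contained argument would use that, since $L>0$ is $1^+$-homogeneous, each $S_c$ is compact and connected (radially diffeomorphic to a sphere); at a point of $S_c$ farthest from the origin the curvature is positive, so by non-vanishing and connectedness $K>0$ throughout, and a Hadamard-type theorem then makes $S_c$ the boundary of a strongly convex body, which is equivalent to positive-definiteness of the Hessian of $L^2$. I would flag the careful justification of this convexity upgrade as the most delicate part, the rest being the routine geometry and linear algebra sketched above.
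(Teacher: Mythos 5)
Your proposal is correct and takes essentially the same route as the paper: both arguments deduce the rank condition \eqref{eq:4} by identifying the restriction of the vertical Hessian of $L$ to a tangent plane of a level surface with a nonzero multiple of its second fundamental form, then invoke Crampin's Lemma~2 \cite{C} to upgrade the positive pseudo-Finsler function $L$ to a Finsler function, and finish by applying Theorem~A. The only differences are in execution --- the paper re-derives the level-set identification elementarily, writing the level surface as a graph in an adapted basis and using the Gauss formula, whereas you quote the standard formula and make explicit, via Euler's relation $\pprc{L}{y}{i}{y}{j}\,y^j=0$, the linear algebra showing that non-degeneracy of the restricted Hessian is equivalent to the full Hessian having rank $n-1$ (a step the paper leaves implicit) --- and your closing sketch of a self-contained convexity argument is not needed, since, exactly as in the paper, the appeal to Crampin's Lemma~2 already settles that point.
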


\bigskip
\begin{proof}
We show that under the given conditions $L$ is a Finsler function.

Let us consider a single tangent space $T_{p}M$ and a level surface of $L$ given by
$\Phi:=\{v\in T_pM\mid L(v)=1\}$. In a linear coordinate system $(y^i)\en$ on $T_pM$, for all $v\in\Phi$ we have $\frac{\pa L}{\pa y^i}y^i(v)=L(v)=1$, hence $L(v)$ is a regular value of $L$, and $\Phi$ is a hypersurface.

Choose an arbitrary point $v\in\Phi$. There is a basis $e_1,\dots,e_{n-1},e_n:=v$ of $T_pM$, with dual $(u^i)\en$, such that $\prc L e\alpha(v)=0$, $\alpha\in\{1,\dots,n-1\}$. Equip $T_pM$ with an Euclidean structure which makes this basis orthonormal. Set $V:=\operatorname{span}(e_1,\dots,e_{n-1})$. Then we can parametrize $\Phi$ on a neighbourhood of $v$ by the mapping
\[
\mathfrak r\colon U\subset V\to\Phi,\quad \mathfrak r:=u^\alpha e_\alpha+z\cdot v,
\]
where $z\colon U\to\RR$ is a smooth function (we agree that Greek indices run from $1$ to $n-1$).
Denoting the components of $\mathfrak r$ by $r^i:=u^i\circ\mathfrak r=u^i+z \delta^i_n$, the Gauss formula gives
\[
\pprc{r^i}u\alpha u\beta=\Gamma^\sigma_{\alpha\beta}\prc{r^i }u\sigma+h_{\alpha\beta} n^i,
\]
where $n^i$ and $h_{\alpha\beta}$ are the component functions of the unit normal vector field $\mathfrak n$ and of the second fundamental form, respectively, and $\Gamma^\sigma_{\alpha\beta}$ are the Christoffel symbols. The derivatives of $r^i$ are
\[
\prc{r^i}u\alpha=\delta^i_\alpha+\prc zu\alpha \delta^i_n,\quad \pprc{r^i}u\alpha u\beta=\pprc zu\alpha u\beta \delta^i_n.\tag{$\ast$}
\]
The mapping $\mathfrak r$ parametrizes $\Phi$, hence $L\circ\mathfrak r=1$.
Differentiating with respect to $u^\alpha$, we obtain
\[
\begin{aligned}[t]
0&=\prc{(L\circ\mathfrak r)}u\alpha=\left(\prc Lui\circ\mathfrak r\right)\prc{r^i}u\alpha=\left(\prc Lu\sigma\circ\mathfrak r\right)\prc{r^\sigma}u\alpha+\left(\prc Lu n\circ\mathfrak r\right)\prc{r^n}u\alpha\\
&\overset{(\ast)}=\prc Lu\alpha\circ\mathfrak r+\left(\prc Lu n\circ\mathfrak r\right)\prc zu\alpha.
\end{aligned}
\tag{$\ast\ast$}
\]
Since $\left(\prc Lui\circ\mathfrak r\right)(0)=\prc Lui(v)=\delta^n_i$, from $(\ast\ast)$ we have $\prc zu\alpha(0)=0$. As a result, the Gauss formula for $i=n$, evaluated at $0$, and taking into account $(\ast)$, gives
\[
\pprc zu\alpha u\beta(0)=h_{\alpha\beta}(0)n^n(0)=h_{\alpha\beta}(0).
\]
Now differentiate $(\ast\ast)$ with respect to $u^\beta$:
\[
0=\pprc Lu\beta u\alpha\circ\mathfrak r +\left(\pprc L u\beta un\circ\mathfrak r\right)\prc zu\alpha+\left(\prc Lun\circ\mathfrak r\right)\pprc zu\beta u\alpha.
\]
Evaluated at $0$, the second term vanishes, and we get
\[
\pprc Lu\beta u\alpha(v)=-\pprc zu\beta u\alpha(0)=-h_{\beta\alpha}(0).
\]
Since the Gauss curvature $K=\frac{\det(h_{\alpha\beta})}{\det(g_{\alpha\beta})}$ of $\Phi$ was supposed to be non-vanishing, $h_{\beta\alpha}$ has maximal rank $n-1$. So $\operatorname{rank}\left(\pprc Lu\beta u\alpha(v)\right)=n-1$.

If $(y^i)\en$ is an arbitrary linear coordinate system on $T_pM$, then $\prc{}ui=A^i_j\prc{}yj$ for some matrix $(A_i^j)$, hence
\[
\pprc Luk ul(v)=A^i_kA^j_l\pprc Lyiyj(v).
\]
Consequently, we also have that $\operatorname{rank}\left(\pprc Lyiyj(v)\right)=n-1$. The point $v$ was arbitrary in $\Phi$, so this holds everywhere on $\Phi$, and this is true for every $p\in M$. Thus the reduced Hessian of $L$ defines a non-degenerate symmetric bilinear form. Hence $L$ is a pseudo-Finsler function on $M$.

According to \cite[Lemma 2]{C}, a pseudo-Finsler function which is positive outside the zero vectors is a Finsler function. It follows that either $L$ or $-L$ is a Finsler function, and Theorem A yields that the geodesics of $L$ are straight lines if, and only if, \eqref{eq:12} holds.
\end{proof}
\bigskip

{\bf 4.} We know that a Riemannian space is of constant curvature if, and only if, it is projectively flat. Thus, as a corollary of Theorem A, we have the following.

\begin{cor}
A Riemannian space $(M,g)$ is of constant curvature if, and only if, there is an atlas $\mathcal A$ on $M$ such that for any chart $(U,(u^i)\en)$ in $\mathcal A$ with induced chart $(\tau^{-1}(U),((x^i)\en,(y^i)\en))$ we have
\[
\pprc{\sqrt{g_{ij}y^iy^j}} xkym y^k=\pprc{\sqrt{g_{ij}y^iy^j}} xmyk y^k.
\]
\end{cor}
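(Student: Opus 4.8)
The plan is to reduce the corollary to Theorem~A applied to the Finsler function canonically attached to $g$, combined with the classical equivalence between constant curvature and projective flatness for Riemannian spaces that was recalled just above (Beltrami's theorem, stated in the introduction as Bompiani's result). First I would record that the Riemannian metric $g$ induces the Finsler function $\Fc\colon TM\to\RR$ given by $\Fc(v)=\sqrt{g_{ij}(\tau(v))\,y^i(v)\,y^j(v)}$, whose expression in an induced chart is exactly $\sqrt{g_{ij}y^iy^j}$, with $g_{ij}$ depending on the base point only. Since $g$ is positive definite, $\Fc$ is a genuine regular, $1^+$-homogeneous and strongly convex Finsler function, so the entire preceding theory applies to $(M,\Fc)$ without modification (in particular no appeal to the Gaussian-curvature hypothesis of Theorem~\ref{thm:1} is needed here).

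With this identification, the proof splits into two reductions. By the recalled equivalence, $(M,g)$ has constant curvature if, and only if, the Finsler space $(M,\Fc)$ is projectively flat, so it remains only to translate projective flatness into the displayed PDE system. For that I would localize: projective flatness is, by definition, a purely local property, witnessed by an atlas in whose charts geodesics become straight line segments. Fixing such a chart $(U,(u^i)\en)$ and observing that $U$ is itself a manifold carrying the single global chart $(u^i)\en$, Theorem~A applies to the restricted Finsler space $(U,\Fc|_{\tau^{-1}(U)})$ and shows that this chart is rectilinear if, and only if, the Hamel equations $\pprc\Fc xkym y^k=\pprc\Fc xmyk y^k$ hold on $\tau^{-1}(U)$. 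Substituting $\Fc=\sqrt{g_{ij}y^iy^j}$ yields precisely the claimed identity, the free index being written $m$ merely to avoid a clash with the summation indices of $g_{ij}$. Running this equivalence over every chart, the rectilinear atlas supplied by projective flatness serves as the desired atlas $\mathcal A$ in one direction, while conversely an atlas satisfying the Hamel equations in each chart is rectilinear chart-by-chart and hence exhibits $(M,\Fc)$ as projectively flat.

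The only genuinely delicate point I anticipate is the matching of formulations: Theorem~A is stated for a manifold admitting a single global chart, whereas the corollary quantifies over all charts of an atlas on a general $M$. The resolution is that projective flatness is local, so Theorem~A may be invoked separately on each coordinate domain, and that the Hamel system is a manifestly pointwise (hence chart-local) condition; once the Riemannian Finsler function has been identified, no further analytic computation is required, and the equivalence assembles globally over the atlas.
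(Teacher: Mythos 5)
Your proposal is correct and follows essentially the same route as the paper, which likewise obtains the corollary by combining the classical equivalence (constant curvature $\Leftrightarrow$ projective flatness, recalled in the introduction) with Theorem~A applied to the Riemannian Finsler function $\sqrt{g_{ij}y^iy^j}$. Your extra care about localizing Theorem~A chart-by-chart is a sound clarification of a point the paper passes over silently.
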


\bigskip
Also we obtain a corollary for Randers spaces. Let $(M,\Fc)$ be a Randers space with metric function locally given by $\Fc=\sqrt{g_{ij}y^iy^j}+b_iy^i$, and suppose that $b_iy^i$ is a \emph{closed} one-form. In this case,
\begin{multline}\label{eq:Rand}
\pprc{\Fc}xsyi y^s-\pprc{\Fc}xiys y^s=\\
\begin{aligned}
&\ =\pprc{\sqrt{g_{kj}y^k y^j}}xsyi y^s-\pprc{\sqrt{g_{kj}y^k y^j}}xiys y^s-\left(\prc{b_s}xi-\prc{b_i}xs\right)y^s\\
&\overset{\textrm{cond.}}=\pprc{\sqrt{g_{kj}y^k y^j}}xsyi y^s-\pprc{\sqrt{g_{kj}y^k y^j}}xiys y^s
\end{aligned}
\end{multline}
By Theorem A, this means that the projective flatness of the Riemannian space $(M,g)$ is equivalent to the projective flatness of the Randers space $(M,\Fc)$. Thus we obtain

\begin{cor}
If in a Randers space the deforming $1$-form is closed, then it is (locally) projectively flat if, and only if, the corresponding Riemannian space is (locally) projectively flat.
\end{cor}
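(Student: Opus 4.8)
The plan is to obtain the corollary directly from Theorem~A, by comparing the \emph{Hamel defect}
\[
D_i[G]:=\pprc Gxkyi y^k-\pprc Gxiyk y^k
\]
of the full Randers function $\Fc=\sqrt{g_{ij}y^iy^j}+b_iy^i$ with that of its Riemannian part $\alpha:=\sqrt{g_{ij}y^iy^j}$. By Theorem~A the space $(M,G)$ is projectively flat exactly when $D_i[G]\equiv0$, so everything reduces to showing that, under the closedness hypothesis, $D_i[\Fc]$ and $D_i[\alpha]$ agree.

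First I would split $\Fc=\alpha+\beta$, where $\beta:=b_i(x)y^i$, and use that $D_i$ is linear in $G$, giving $D_i[\Fc]=D_i[\alpha]+D_i[\beta]$. The decisive feature is that $\beta$ is \emph{linear} in the fibre coordinates: then $\prc\beta yi=b_i$, so $\pprc\beta xkyi=\prc{b_i}xk$ is independent of $y$ and the contraction is immediate,
\[
D_i[\beta]=\prc{b_i}xk y^k-\prc{b_k}xi y^k=-\left(\prc{b_k}xi-\prc{b_i}xk\right)y^k .
\]
This is precisely the last term appearing in the computation \eqref{eq:Rand}, namely (up to sign) the contraction of the exterior derivative of the one-form $b_i\,dx^i$ with $y$.

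The closedness hypothesis now finishes the argument. Saying that $b_i\,dx^i$ is closed means $\prc{b_k}xi-\prc{b_i}xk=0$ for all $i,k$, hence $D_i[\beta]\equiv0$ and therefore $D_i[\Fc]=D_i[\alpha]$. By Theorem~A the left-hand side vanishes iff $(M,\Fc)$ is projectively flat, and the right-hand side vanishes iff $(M,\alpha)=(M,g)$ is projectively flat (this last equivalence is the content of Corollary~1). Since the two defects coincide, the two projective-flatness properties are equivalent, which is the assertion.

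I do not expect a real obstacle: the whole point is that the deforming term $\beta$ is linear in $y$, so its second fibre-derivatives vanish identically, $\pprc\beta yiyj=0$, and its contribution to the Hamel defect collapses to the antisymmetrised first derivatives of the $b_i$. The only step needing a word of care is to confirm that no further $\beta$-terms survive the contraction with $y^k$; but this is automatic from $\pprc\beta yiyj=0$, and the computation is exactly the one already displayed in \eqref{eq:Rand}.
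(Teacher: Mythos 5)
Your proof is correct and is essentially identical to the paper's argument: the paper's computation \eqref{eq:Rand} is exactly your decomposition $\Fc=\alpha+\beta$, with the defect of the linear term $\beta=b_iy^i$ collapsing to $-\left(\prc{b_s}xi-\prc{b_i}xs\right)y^s$, which closedness kills, after which Theorem~A gives the equivalence. No difference in substance, only in notation.
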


\bigskip
{\bf 5.}  Relation \eqref{eq:Ham} is not a tensor relation, as it is not preserved by changes of coordinate systems in general. So given a Finsler space $(M,\Fc)$ and a global coordinate system $(\bar u^i)\en$ on $M$, with induced coordinate system $((\bar x^i)\en,(\bar y^i)\en)$ on $TM$,
the fact that
\[
\frac{\pa^2 \Fc}{\pa \bar x{}^k \pa \bar y{}^i}\bar y{}^k = \frac{\pa^2 \Fc}{\pa \bar x^i\pa \bar y{}^k} \bar y{}^k
\]
is not true does not mean that $(M,\Fc)$ is not projectively flat, because there may exists another coordinate system $(u^i)\en$ such that
\eqref{eq:Ham} holds. We give conditions for the existence of such a coordinate system, and thus for the projective flatness of $(M,\Fc)$.

Let $(u^i)\en$ and $(\bar u^i)\en$ be two coordinate systems on $M$. We have the transformation rules
\begin{align*}
&\bar y^i=y^k\prc{\bar x^i}xk=y^k\left(\prc{\bar u^i}uk\right)^{\mathsf v} ,\\
&\prc{}xi=\prc{\bar x^k}xi\prc{}{\bar x}k+y^l\pprc{\bar x^k}xlxi\prc{}{\bar y}k=\left(\prc{\bar u^k}ui\right)^{\mathsf v}\prc{}{\bar x}k+y^l\left(\pprc{\bar u^k}ului\right)^{\mathsf v}\prc{}{\bar y}k,\\
&\prc{}yi=\prc{\bar x^k}xi\prc{}{\bar y}k=\left(\prc{\bar u^k}ui\right)^{\mathsf v}\prc{}{\bar y}k
\end{align*}
Where ${}^{\mathsf v}$ means the vertical lift.
Then
\[
\frac{\pa \Fc}{\pa y^i}=\frac{\pa \Fc}{\pa\bar y{}^r}\, \frac{\pa \bar x{}^r}{\pa x^i},
\]
and after calculating $\frac{\pa^2\Fc}{\pa x^s \pa y^i}$, we obtain
\begin{align}
& \frac{\pa^2\Fc}{\pa x^s \pa y^i}y^s -\frac{\pa^2\Fc}{\pa x^i\pa y^s} y^s= \label{eq:19} \tag{A} \\[5pt]
&\qquad\begin{aligned}&=\left(\dfrac{\pa^2 \Fc}{\pa \bar x{}^m \pa \bar y^l}-\dfrac{\pa^2\Fc}{\pa \bar x{}^l\pa \bar y{}^m}\right)
\left(\dfrac{\pa \bar u{}^m}{\pa u^s}\, \dfrac{\pa \bar u{}^l}{\pa u^i}\right)^{\mathsf v}y^s \\
&\quad+ \dfrac{\pa^2\Fc}{\pa \bar y{}^r \pa \bar y{}^l}\left[\dfrac{\pa^2\bar u{}^l}{\pa u^s\pa u^k}\, \dfrac{\pa\bar u{}^r}{\pa u{}^i}-
\dfrac{\pa^2\bar u{}^l}{\pa u^i\pa u^k}\, \dfrac{\pa\bar u{}^r}{\pa u^s}\right]^{\mathsf v} y^s y^k.
\end{aligned}\tag{B}
\label{eq:20a}
\end{align}
For a given Finsler function $\Fc$ and coordinate system $(u^i)\en$, $\eqref{eq:20a}=0$ is a second-order PDE system for the functions $(\bar u^i)\en$. If this PDE system is solvable, then \eqref{eq:19} vanishes, which means that $(M,\Fc)$ is projectively flat, and $(u^i)\en$ is a rectilinear coordinate system. Conversely, if $(M,\Fc)$ is projectively flat, then \eqref{eq:19} vanishes, and the functions $(\bar u{}^i)\en$ solve $\eqref{eq:20a}=0$. This yields
\begin{thm}\label{thm:2}
If $M$ is a manifold with a global coordinate system $(u^i)\en$ (and induced coordinate system $((x^i)\en,(y^i)\en)$ on $TM$), then a Finsler space $(M,\Fc)$ is projectively flat if, and only if, the second-order PDE system $\eqref{eq:20a}=0$ is solvable for $(\bar u^i)\en$.
\end{thm}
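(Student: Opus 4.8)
The plan is to obtain Theorem~\ref{thm:2} as an immediate consequence of Theorem~A together with the coordinate-change identity \eqref{eq:19}=\eqref{eq:20a}, already established above by inserting the transformation rules for $\bar y^i$, $\pa/\pa x^i$ and $\pa/\pa y^i$ into $\pa^2\Fc/\pa x^s\pa y^i$. Theorem~A says that, on a single-chart manifold, $(M,\Fc)$ is projectively flat precisely when \emph{some} chart carries the Hamel equations \eqref{eq:Ham}, equivalently when the Hamel expression \eqref{eq:19} attached to that chart vanishes. The identity is tailored to convert this: its right-hand side \eqref{eq:20a} rewrites the Hamel expression of one chart through the Hamel data of the other chart plus the first and second derivatives of the transition functions $(\bar u^i)\en$. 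Hence \emph{``there exists a rectilinear chart''} is to be recast as \emph{``the second-order system \eqref{eq:20a}$=0$ is solvable in $(\bar u^i)\en$''}, and this recasting is exactly what the identity delivers.

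For the two directions I would argue as follows. Sufficiency: if \eqref{eq:20a}$=0$ has a solution $(\bar u^i)\en$ whose Jacobian is non-degenerate, so that it defines a genuine change of chart, then substituting it into the identity forces \eqref{eq:20a}, and therefore the left-hand Hamel expression \eqref{eq:19}, to vanish; the Hamel equations \eqref{eq:Ham} then hold in the relevant chart, and Theorem~A yields projective flatness. Necessity: if $(M,\Fc)$ is projectively flat, Theorem~A supplies a rectilinear chart; the transition functions relating it to the prescribed one make the Hamel expression \eqref{eq:19} vanish, and feeding them into the identity shows precisely that they solve \eqref{eq:20a}$=0$. Running this equivalence in both directions proves the theorem; no further computation beyond the already-derived identity is required.

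The identity being in hand, the difficulty I expect to be decisive is not computational but one of careful bookkeeping of the two charts: one must pin down which chart carries the prescribed Finsler data and which is the unknown, so that \eqref{eq:20a}$=0$ is read as a non-vacuous (and in general nonlinear) second-order system for $(\bar u^i)\en$, with the derivatives of $\Fc$ entering as coefficients, rather than as a fixed identity independent of the unknowns. A secondary point to impose is that an admissible solution must be a local diffeomorphism, i.e.\ have invertible Jacobian, so that it genuinely defines a chart; in the necessity direction this holds automatically, since every change of chart is of this type, but in the sufficiency direction it has to be required of the solution. Finally I would record that the single-chart hypothesis on $M$ is exactly what licenses the appeal to Theorem~A, and that---\eqref{eq:Ham} not being a tensorial relation---there is no way to avoid the explicit transition functions and thereby shorten the argument.
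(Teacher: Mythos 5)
Your proposal is correct and takes essentially the same route as the paper: both rest on the identity \eqref{eq:19}$\,=\,$\eqref{eq:20a} together with Theorem~A, reading $\eqref{eq:20a}=0$ as a second-order system for the transition functions between the prescribed chart and a putative rectilinear one. You are in fact more explicit than the paper on the two delicate points it glosses over---which chart carries the prescribed Finsler data (so that the system is non-vacuous rather than an identity independent of the unknowns), and that an admissible solution must have invertible Jacobian so as to define a genuine chart.
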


{\bf 6.} Let $(M,\Fc)$ be a Finsler manifold, $(u^i)\en$ a rectilinear coordinate system on $M$.
The question rises: which other coordinate systems $(\bar u^i)\en$ on $M$ are geodesically linear?

\begin{prop}\label{prop:1}
Let $(M,\Fc)$ be a Finsler manifold, $(u^i)\en$ a global rectilinear coordinate system on $M$. Then $\bar u=(\bar u^1,\dots,\bar u^n)$ is another rectilinear coordinate system if, and only if, the coordinate transformation $\bar u\circ u^{-1}$ is affine.
\end{prop}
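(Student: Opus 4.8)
The plan is to prove the two implications separately, leaning on the change-of-variables identity \eqref{eq:19}\,=\,\eqref{eq:20a} and on the rank of the reduced Hessian supplied by Theorem~\ref{thm:1}.

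For the sufficiency, suppose $\psi:=\bar u\circ u^{-1}$ is affine, $\psi(v)=Av+c$ with $A\in GL_n(\RR)$. Since $(u^i)\en$ is rectilinear, every geodesic $\gamma$ satisfies $u\circ\gamma(t)=f(t)a+b$ with $f$ strictly monotone and $a\neq0$; applying $\psi$ yields $\bar u\circ\gamma(t)=f(t)(Aa)+(Ab+c)$, which is again of the form \eqref{eq:1a} with the \emph{same} $f$ and with $Aa\neq0$. Hence $(\bar u^i)\en$ is rectilinear, and this direction presents no difficulty.

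For the necessity, assume both systems are rectilinear and insert them into \eqref{eq:19}\,=\,\eqref{eq:20a}, with $(u^i)\en$ as the unbarred and $(\bar u^i)\en$ as the barred system. Rectilinearity of $(u^i)\en$ makes the left-hand side \eqref{eq:19} vanish, while rectilinearity of $(\bar u^i)\en$ makes the antisymmetric factor $\frac{\pa^2\Fc}{\pa\bar x^m\pa\bar y^l}-\frac{\pa^2\Fc}{\pa\bar x^l\pa\bar y^m}$, once contracted against $\bar y^m$, vanish, so the first summand of \eqref{eq:20a} drops out and the second summand must vanish identically in $(x,y)$. Because $\frac{\pa\Fc}{\pa\bar y^l}$ is $0^+$-homogeneous, Euler's relation gives $\frac{\pa^2\Fc}{\pa\bar y^r\pa\bar y^l}\bar y^r=0$; as $\bar y^r=\left(\frac{\pa\bar u^r}{\pa u^s}\right)^{\mathsf v}y^s$, this kills the part of the bracket carrying $\frac{\pa\bar u^r}{\pa u^s}$, leaving $\frac{\pa^2\Fc}{\pa\bar y^r\pa\bar y^l}\left(\frac{\pa\bar u^r}{\pa u^i}\right)^{\mathsf v}\left(\frac{\pa^2\bar u^l}{\pa u^s\pa u^k}\right)^{\mathsf v}y^sy^k=0$. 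Since the Jacobian is invertible and $i$ is free, I would deduce $\frac{\pa^2\Fc}{\pa\bar y^r\pa\bar y^l}\left(\frac{\pa^2\bar u^l}{\pa u^s\pa u^k}\right)^{\mathsf v}y^sy^k=0$ for every $r$.

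By Theorem~\ref{thm:1} the reduced Hessian $\frac{\pa^2\Fc}{\pa\bar y^r\pa\bar y^l}$ has rank $n-1$, and Euler's relation shows its kernel is exactly $\langle\bar y\rangle$; hence the vector $\left(\frac{\pa^2\bar u^l}{\pa u^s\pa u^k}\right)^{\mathsf v}y^sy^k$ is proportional to $\bar y^l$ for every $y$. Comparing degrees of homogeneity the factor of proportionality is linear in $y$, so this amounts to a relation of the projective type $\frac{\pa^2\bar u^l}{\pa u^s\pa u^k}=\mu_s\frac{\pa\bar u^l}{\pa u^k}+\mu_k\frac{\pa\bar u^l}{\pa u^s}$ for some covector field $\mu$. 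I expect the main obstacle to lie exactly here: this relation is satisfied by \emph{all} fractional-linear (projective) changes, not only by the affine ones, so one still has to force $\mu\equiv0$. Geometrically the same point is that $\psi$ carries the straight images of geodesics to straight lines, and a line-preserving diffeomorphism of a domain in $\RR^n$ is a priori only projective. I would close this gap by invoking the global nature of the charts: a non-affine projective transformation has a singular hyperplane and so cannot realize a change between two global charts, which forces $\mu\equiv0$, i.e.\ $\frac{\pa^2\bar u^l}{\pa u^s\pa u^k}=0$ and $\psi$ affine. This is the delicate step, where the global hypotheses, rather than the pointwise computation, carry the argument.
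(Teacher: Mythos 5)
Your sufficiency direction is correct and is actually more elementary than the paper's: an affine map sends each straight image $f(t)a+b$ of a geodesic to $f(t)(Aa)+(Ab+c)$, so rectilinearity of $(\bar u^i)\en$ follows straight from the definition, whereas the paper substitutes $\pprc{\bar u^l}usuk=0$ into the equality of \eqref{eq:19} and \eqref{eq:20a} and then uses invertibility of the Jacobian to recover Hamel's equations for the barred system. Both routes work.

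The necessity direction is where the genuine problem sits, and you located it exactly. Your reduction is correct, and in fact more careful than the paper's own proof: using Euler's relation $\pprc\Fc{\bar y}r{\bar y}l\,\bar y^r=0$ to discard half of the bracket, then invertibility of the Jacobian, then the fact that the kernel of the rank-$(n-1)$ Hessian is $\langle\bar y\rangle$, you arrive (after a routine divisibility/polarization step) at the projective relation $\pprc{\bar u^l}usuk=\mu_s\parcv{\bar u^l}{u^k}+\mu_k\parcv{\bar u^l}{u^s}$. The paper, at the same point, simply asserts that the vanishing of the contraction $(D^{lr}_{ski})^{\mathsf v}\bigl[\pprc\Fc{\bar y}l{\bar y}r\,y^sy^k\bigr]=0$ forces $D^{lr}_{ski}=0$; that inference is invalid for exactly the reason you isolate, since one checks that for any transformation satisfying your projective relation this contraction vanishes identically while $D^{lr}_{ski}\neq0$. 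However, your closing argument does not hold: a global chart is one defined on all of $M$, but its image $u(M)$ need not be all of $\RR^n$, so a non-affine projective transformation whose singular hyperplane misses $u(M)$ is a perfectly legitimate transition between global charts. Concretely, let $M$ be the open unit ball, $\Fc$ the Klein (Hilbert) metric, $u$ the identity chart, and $\bar u=\psi\circ u$ with $\psi$ a non-affine projective automorphism of the ball (a hyperbolic isometry in the Klein model). Geodesics are chords, $\psi$ maps chords to chords, so both charts are global and rectilinear, yet $\bar u\circ u^{-1}=\psi$ is not affine. Hence $\mu\equiv0$ cannot be forced, and the gap is not closable without an additional hypothesis such as $u(M)=\bar u(M)=\RR^n$; the same example shows that the paper's own proof of this direction, and indeed the stated proposition itself, do not survive in this generality.
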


\begin{proof}
Let $\varphi:=\bar u\circ u^{-1}\colon u(M)\to \bar u(M)$, $(e^i)\en$ the canonical coordinate system on $\RR^n$, and $D_i$ the usual partial derivative. If $\varphi$ is affine, then we have
\begin{equation}\label{eq:aff}
\pprc{\bar u^l}usuk=D_sD_k(\bar u^l\circ u^{-1})\circ u=D_sD_k(e^l\circ\varphi)\circ u=0,
\end{equation}
and hence the bracket in \eqref{eq:20a} vanishes. Then, taking into account the transformation rule of $\bar y^i$, from the equality of \eqref{eq:19} and \eqref{eq:20a} we obtain
\[
 0=\frac{\pa^2\Fc}{\pa x^s \pa y^i}y^s -\frac{\pa^2\Fc}{\pa x^i\pa y^s} y^s=
 \left(\dfrac{\pa^2 \Fc}{\pa \bar x{}^m \pa \bar y^l}-\dfrac{\pa^2\Fc}{\pa \bar x{}^l\pa \bar y{}^m}\right)\bar y^m
 \left(\dfrac{\pa \bar u{}^l}{\pa u^i}\right)^{\mathsf v}.
\]
This shows that $(\bar u^i)\en$ is a geodesically linear coordinate system.

Conversely, if both $(u^i)\en$ and $(\bar u^i)\en$ are geodesically linear, then the first parenthesis in \eqref{eq:20a} vanishes. Denoting the function in the bracket by $D^{lr}_{ski}$ we may write
\[
(D^{lr}_{ski})^{\mathsf v}\left[\frac{\pa^2\Fc}{\pa \bar y{}^l \pa \bar y{}^r}y^sy^k\right]=0.
\]
However, $\frac{\pa^2\Fc}{\pa \bar y{}^l \pa \bar y{}^r}y^sy^k$ is non-vanishing on every\ tangent space, thus $D_{ski}^{lr}$ must be constant zero. Let $A_k^l:=\frac{\pa \bar u^l}{\pa u^k}$, $(B_k^l):=(A_k^l)^{-1}$. Then the relation $D^{lr}_{ski}=0$ takes the form
\[
\left(\frac{\pa}{\pa u^s}A^l_k\right)A_i^r -\left(\frac{\pa}{\pa u^i}A_k^l\right) A_s^r=0.
\]
Multiplying by $B_r^m$, we obtain
\[
\left(\frac{\pa}{\pa u^s}A_k^l\right)\delta _i^m - \left(\frac{\pa}{\pa u^i} A_k^l\right) \delta_s^m =0.
\]
A contraction on $m$ and $i$ yields $0=\frac{\pa}{\pa u^s}A_k^l=\pprc{\bar u^l}usuk$, and hence we get \eqref{eq:aff} as desired.
\end{proof}

One can see in a similar way that the formula $\frac{\pa^2\Fc}{\pa x^k \pa y^i}$ is preserved by coordinate transformations if, and only if, the coordinate transformation is affine.

\bigskip
{\bf 7.} Let $(M,\Fc)$ be a Finsler space. We want to find a condition under which an infinitesimal transformation takes geodesics into geodesics as point sets. We represent an infinitesimal transformation by the velocity vector field $X$ of a flow $\varphi$ on $M$. We can assume that the flow is global, and $M$ admits a global coordinate system $(u^i)\en$. Consider a geodesic $\gamma\colon I\to M$, and define the curves
\[\gamma_s\colon t\in I\to \gamma_s(t):=\varphi(s,\gamma(t))\in M.
\]
The component functions of $\gamma_s$, $\dot\gamma_s$ and $\ddot\gamma_s$ are given by
  \begin{align}
  \gamma_s^k(t)&=\varphi^k(s,\gamma(t))\label{eq:gammas},\\
  (\gamma_s^k)'(t)&=\parcv{\varphi^k}{u^i}(s,\gamma(t))(\gamma^i)'(t)\label{eq:gammasdot},\\
  (\gamma_s^k)''(t)&=\parcv{\varphi^k}{u^i}(s,\gamma(t))(\gamma^i)''(t)+\parcv{^2\varphi^i}{u^i\partial u^j}(s,\gamma(t)) (\gamma^i)'(t)(\gamma^j)'(t),\label{eq:gammasddot}
  \end{align}
  for all $t\in I$, where $\varphi^i:=u^i\circ\varphi$. Since $X$ is the velocity vector field of $\varphi$, its component functions $X^i=y^i\circ X$ satisfy $X^i(p)=(s\mapsto \varphi^i(s,p))'(0)$. Hence, differentiating \eqref{eq:gammas}, \eqref{eq:gammasdot} and \eqref{eq:gammasddot} with respect to $s$, we obtain
  \begin{align*}
    (s\mapsto\gamma_s^k)'&=X^k\circ\gamma_s,\\
    (s\mapsto(\gamma_s^k)')'&=\left(\parcv{X^k}{u^i}\circ\gamma_s\right)(\gamma_s^i)',\\
    (s\mapsto(\gamma_s^k)'')'&=\left(\parcv{X^k}{u^i}\circ\gamma_s\right)(\gamma_s^i)''+\left(\parcv{^2X^i}{u^i\partial u^j}\circ\gamma_s\right)(\gamma_s^i)'(\gamma_s^j)'.
  \end{align*}
  If the curves $\gamma_s$ are geodesics as point sets, then
  \begin{equation}\label{eq:24}
  \parcv{\Fc}{x^i}\circ\dot\gamma_s=\left(\parcv{^2\Fc}{x^k\partial y^i}\circ\dot\gamma_s\right)(\gamma_s^k)'+\left(\parcv{^2\Fc}{y^k\partial y^i}\circ\dot\gamma_s\right)(\gamma_s^k)''
  \end{equation}
  holds. Differentiating this relation with respect to $s$ at $0$, a straightforward calculation gives
  \begin{equation}\label{eq:25}
  \begin{aligned}
&\left[\frac{\paa\Fc}{\pa x^s\pa x^i}-\frac{\pa^3\Fc}{\pa x^s\pa x^k\pa y^i}y^k-\frac{\pa^3\Fc}{\pa x^s\pa y^k\pa y^i}C^k\right] (X^s)^{\mathsf v}\\
&\qquad +\left[\left(\frac{\paa \Fc}{\pa y^s\pa x^i}-\frac{\paa \Fc}{\pa y^i\pa x^s}\right)y^r-\frac{\pa^2\Fc}{\pa y^s\pa y^i}C^{\,r} \right.\\
&\qquad -\left.\left(\frac{\pa^3\Fc}{\pa y^s\pa x^k\pa y^i}y^k
+\frac{\pa^3\Fc}{\pa y^s\pa y^k\pa y^i} C^k\right)y^r\right]\left(\frac{\pa X^s}{\pa u^r}\right)^{\mathsf v}\\
&\qquad -\left[\frac{\paa \Fc}{\pa y^s\pa y^i}y^my^r\right]\left(\frac{\paa X^s}{\pa u^m\pa u^r}\right)^{\mathsf v}=0
\end{aligned}
\end{equation}
evaluated at the tangent vectors $\dot\gamma(t)$, $t\in I$. Here, the symbols $C^k$ stand for some functions satisfying
\[
\frac{\paa \Fc}{\pa y^k\pa y^i}C^k=\frac{\pa \Fc}{\pa x^i}-\frac{\paa \Fc}{\pa x^k\pa y^i} y^k.
\]
Since $\gamma$ was an arbitrary geodesic, \eqref{eq:25} holds everywhere on $TM$.
So the component functions $X^i$ of the vector field $X$ must satisfy \eqref{eq:25} in order that $\varphi$ take geodesics into geodesics as point sets. That is \eqref{eq:25} is a necessary condition. However \eqref{eq:25} is also sufficient. Namely \eqref{eq:Eul2} and \eqref{eq:25} lead to \eqref{eq:24}. These yield

\begin{thm}\label{thm:3}
Given a Finsler space $(M,\Fc)$, the flow of a vector field $X$ on $M$ takes geodesics into geodesics as point sets if, and only if, in any coordinate-system the component functions $X^i$ of $X$ satisfy the second-order PDE system \eqref{eq:25}.
\end{thm}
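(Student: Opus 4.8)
The plan is to read the hypothesis ``the flow of $X$ takes geodesics into geodesics as point sets'' as the statement that each pushed-forward curve satisfies the Euler--Lagrange equation, and then to recognize the whole theorem as the integrated form of an infinitesimal identity obtained by differentiating that equation at $s=0$. First I would record the reduction already implicit in the excerpt: because $\Fc$ is positively $1$-homogeneous, the Finsler length functional, and hence the Euler--Lagrange equation \eqref{eq:Eul}, is invariant under orientation-preserving reparametrization, so its solution set is exactly the set of unparametrized geodesics. Consequently, for each fixed $s$ the curve $\gamma_s$ is a geodesic as a point set if and only if it satisfies \eqref{eq:24} in its given parametrization. Thus the theorem reduces to the assertion that \eqref{eq:24} holds for every geodesic $\gamma$ and every $s$ if and only if the component functions $X^i$ satisfy \eqref{eq:25} on all of $TM$.

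For necessity I would fix a geodesic $\gamma$ and, assuming all $\gamma_s$ are geodesics as point sets, note that
\[
\Psi_i(s):=\parcv{\Fc}{x^i}\circ\dot\gamma_s-\left(\parcv{^2\Fc}{x^k\partial y^i}\circ\dot\gamma_s\right)(\gamma_s^k)'-\left(\parcv{^2\Fc}{y^k\partial y^i}\circ\dot\gamma_s\right)(\gamma_s^k)''
\]
vanishes identically in $s$. Differentiating $\Psi_i$ at $s=0$ by means of the $s$-derivative formulas for $\gamma_s^k,(\gamma_s^k)',(\gamma_s^k)''$ computed before \eqref{eq:24}, and replacing $(\gamma^k)''$ by $C^k$ (legitimate because $\gamma$ is a geodesic, i.e.\ \eqref{eq:Eul2} holds), produces precisely the left-hand side of \eqref{eq:25} evaluated at $\dot\gamma(t)$. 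Since every tangent vector is the initial velocity of some geodesic and \eqref{eq:25} is a pointwise relation on $TM$, letting $\gamma$ and $t$ vary shows that \eqref{eq:25} holds everywhere on $TM$.

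For sufficiency I would run this computation in reverse, the subtlety being that $\gamma_s$ is not yet known to be a geodesic for $s\neq 0$, so $C^k$ cannot simply be substituted for $(\gamma_s^k)''$. Here I would invoke the group law of the flow, $\gamma_{s_0+s}=(\gamma_{s_0})_s$, which lets me write $\Psi_i'(s_0)$ as the derivative at $s=0$ of the Euler--Lagrange defect of the \emph{flowed} curve $\gamma_{s_0}$. Repeating the necessity computation with base curve $\gamma_{s_0}$ and writing $(\gamma_{s_0}^k)''=C^k+\Delta^k$, the identity $\Psi_i=-\parcv{^2\Fc}{y^k\partial y^i}\big((\gamma_{s_0}^k)''-C^k\big)$ along $\dot\gamma_{s_0}$ exhibits $\Delta^k$, modulo the kernel direction $\langle y^k\rangle$ of the reduced Hessian, as a linear function of the $\Psi_j$. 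The surviving terms then split as the left-hand side of \eqref{eq:25} evaluated at $\dot\gamma_{s_0}$, which vanishes by hypothesis, plus a part linear in $\Psi$. This yields a linear homogeneous ODE system $\Psi_i'(s)=L_i{}^{\,j}(s)\,\Psi_j(s)$ with $\Psi_i(0)=0$ (since $\gamma$ is a geodesic), whence $\Psi_i\equiv 0$ by uniqueness; so every $\gamma_s$ satisfies \eqref{eq:24} and is a geodesic as a point set.

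The main obstacle I expect is exactly this last linearization: controlling the $s\neq 0$ computation when $\gamma_{s_0}$ fails to be a geodesic, and in particular the bookkeeping for the rank-$(n-1)$ reduced Hessian, whose kernel is the reparametrization direction $y^k$. One must check that $\Delta^k$ is well defined modulo $\langle y^k\rangle$, that $\Psi_i$ automatically lies in the image of the Hessian (indeed $\Psi_iy^i=0$ by $0$-homogeneity of $\parcv{^2\Fc}{y^k\partial y^i}$), and that the resulting first-order system in $\Psi_i$ is genuinely homogeneous so that the uniqueness argument applies. Everything else is the routine differentiation already carried out in the paragraphs preceding the statement.
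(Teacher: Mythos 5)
Your proposal is correct. Its necessity half is essentially the paper's own argument: differentiate the Euler--Lagrange defect of $\gamma_s$ at $s=0$ using the $s$-derivative formulas, trade $(\gamma^k)''$ for $C^k$ along the geodesic via \eqref{eq:Eul2}, and let the geodesic vary to get \eqref{eq:25} on the slit bundle $\mathring TM$. The genuine difference is in sufficiency, and it is to your credit: the paper's entire sufficiency proof is the sentence ``Namely \eqref{eq:Eul2} and \eqref{eq:25} lead to \eqref{eq:24}'', which, read literally, only reverses the computation at $s=0$ and therefore shows that the defect $\Psi_i(s)$ has vanishing derivative at $s=0$, not that $\Psi_i(s)=0$ for all $s$. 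The circularity you isolate --- one may not substitute $C^k$ for $(\gamma_{s_0}^k)''$ before knowing that $\gamma_{s_0}$ is a geodesic --- is exactly what the paper leaves untreated, and your argument closes it: the group law $\gamma_{s_0+s}=(\gamma_{s_0})_s$ together with the identity $\Psi_i=-\frac{\paa\Fc}{\pa y^k\pa y^i}\bigl((\gamma_{s_0}^k)''-C^k\bigr)$ along $\dot\gamma_{s_0}$ turns the $s$-derivative of $\Psi_i$ into the left-hand side of \eqref{eq:25} at $\dot\gamma_{s_0}$ (zero by hypothesis) plus a term linear in $\Psi$, so linear ODE uniqueness with $\Psi(0)=0$ finishes. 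Your flagged bookkeeping does check out: by homogeneity, $\frac{\pa^3\Fc}{\pa x^s\pa y^k\pa y^i}y^k=0$ and $\frac{\pa^3\Fc}{\pa y^s\pa y^k\pa y^i}y^k=-\frac{\paa\Fc}{\pa y^s\pa y^i}$, so the $\langle y\rangle$-ambiguity in $\Delta^k$ cancels between the third-order term and the term $-\frac{\paa\Fc}{\pa y^k\pa y^i}\frac{\pa X^k}{\pa u^r}\Delta^r$; the same two identities show that \eqref{eq:25} itself does not depend on the choice of the functions $C^k$, a point the paper also glosses over. In short, the paper's treatment is shorter and is really a derivation of the infinitesimal equation \eqref{eq:25}, while your version supplies the integration step (infinitesimal invariance of the geodesic system implies invariance under the finite flow) needed for a complete proof of sufficiency, at the cost of the linearization bookkeeping. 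Two small touch-ups: \eqref{eq:25} should be asserted on $\mathring TM$ rather than all of $TM$, and the verification of $\Psi_iy^i=0$ needs, besides $\frac{\paa\Fc}{\pa y^k\pa y^i}y^k=0$, also the cancellation coming from $\frac{\paa\Fc}{\pa x^k\pa y^i}y^i=\frac{\pa\Fc}{\pa x^k}$ (a consequence of the $1^+$-homogeneity of $\Fc$); both hold, so your claim stands.
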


In the PDE \eqref{eq:25}, $y^1,\dots,y^n$ can be considered as arbitrary parameters. However, if we replace the vertical lifts $(X^i)^{\mathsf v}$ with arbitrary functions $\xi^s$ on $TM$, and we complete \eqref{eq:25} with the equations
\[
\frac{\pa \xi^s}{\pa y^r}=0,  \label{eq:26}
\]
then we get a second order PDE system on $TM$ for the functions $\xi^s$.

If $\varphi$ is an infinitesimal isometry, then the role of \eqref{eq:25} is played by the Killing equations. Thus \eqref{eq:25} can be considered as a Killing type equation, when infinitesimal isometries are replaced by infinitesimal geodesic mappings.

\section{The parameter-preserving case}
In this section we investigate projectively flat Finsler spaces, such that in suitable coordinate systems the  geodesics are not just straight lines, but affinely parametrized ones. That is, any geodesic is of the form \mbox{$u(\gamma(t))=ta+b$} (see \eqref{eq:1b}). These Finsler spaces are called \emph{parameter preserving projectively flat}.

{\bf 1.} Let $(M,\Fc)$ be a Finsler space. It is easy to see that a curve $\gamma$ is a geodesic if, and only if, it satisfies the Euler\,--\,Lagrange equations for $\Fc^2$:
\begin{multline}
\frac{\partial \Fc^2}{\pa x^i}\circ\gamma -\frac {d}{dt}\left(\frac{\pa\Fc^2}{\pa y^i}\circ\gamma\right)=\\
=\frac{\pa\Fc^2}{\pa x^i}\circ\gamma-\left(\frac{\pa^2\Fc^2}{\pa x^k\pa y^i}\circ\gamma\right)(\gamma^k)'-\left(\frac{\pa^2\Fc^2}{\pa y^k\pa y^i}\circ\gamma\right)(y^k)''=0.
\label{eq:27}
\end{multline}
Namely, along any such geodesic $\gamma$ we obtain
\begin{multline*}
  \prc{\Fc^2}xi\mathop\circ\gamma-\frac d{dt}\left(\prc{\Fc^2}yi\circ\gamma\right)=\\=2\Fc\left[\prc\Fc xi\circ\gamma-\frac d{dt}\left(\prc\Fc yi\circ\gamma\right)\right]-2\left(\frac d{dt}(F\circ\gamma)\right)\prc\Fc yi\circ\gamma.
\end{multline*}
If $\gamma$ has constant speed, then $\Fc\circ\gamma$ is constant, and the last parenthesis drops out. Thus, for curves $\gamma$ with constant speed, relation \eqref{eq:27} and the vanishing of the bracket are equivalent.
Then the counterpart of Theorem A is the following.
\bigskip
\begin{thm}\label{thm:4}
On a manifold $M$ with an atlas consisting of a single chart, a Finsler space $(M,\Fc)$ is projectively flat in a parameter-preserving manner if, and only if, there exists a coordinate system $(u^i)\en$ on $M$ such that
\begin{equation}
\frac{\paa \Fcc}{\pa x^k\pa y^i}y^k=\frac 12\, \frac{\paa \Fcc}{\pa x^i\pa y^k}y^k. \label{eq:29}
\end{equation}
\end{thm}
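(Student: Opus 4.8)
The plan is to mimic the proof of Theorem~A, but to replace the Finsler function $\Fc$ by its square $\Fcc$ and the Euler\,--\,Lagrange system \eqref{eq:Eul2} by \eqref{eq:27}. The decisive preliminary is a homogeneity identity for $\Fcc$: since $\Fc$ is $1^+$-homogeneous, $\Fcc$ is $2^+$-homogeneous in $y$, so Euler's relation $\prc{\Fcc}yk y^k=2\Fcc$ holds, and differentiating it with respect to $x^i$ gives
\[
\pprc{\Fcc}xiyk y^k=2\,\prc{\Fcc}xi .
\]
Consequently \eqref{eq:29} is equivalent to the more transparent relation $\pprc{\Fcc}xkyi y^k=\prc{\Fcc}xi$, and this is the form I would actually use. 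I would also record that, by the computation preceding the theorem, every geodesic has constant speed and therefore satisfies \eqref{eq:27}; only this forward implication is needed in both directions of the argument.

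For \emph{necessity}, I assume every geodesic $\gamma$ has the affine form \eqref{eq:1b}, i.e.\ $u\circ\gamma(t)=ta+b$ in the chosen chart, so that $(\gamma^k)'=a^k$ and $(\gamma^k)''=0$. Substituting $(\gamma^k)''=0$ into \eqref{eq:27} and using $y^k\circ\dot\gamma=(\gamma^k)'$ collapses the equation to $\bigl[\prc{\Fcc}xi-\pprc{\Fcc}xkyi y^k\bigr]\circ\dot\gamma=0$. Because through each point and each nonzero direction there passes a geodesic, the initial velocities $\dot\gamma(0)$ sweep out all of $\mathring TM$, so this bracket vanishes identically on $\mathring TM$; the homogeneity identity above then rewrites it as \eqref{eq:29}.

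For \emph{sufficiency}, I assume \eqref{eq:29} and let $\gamma$ be an arbitrary geodesic. Having constant speed, it satisfies \eqref{eq:27}; the equivalent form $\pprc{\Fcc}xkyi y^k=\prc{\Fcc}xi$ of \eqref{eq:29} cancels the first two terms of \eqref{eq:27}, leaving
\[
\left(\pprc{\Fcc}ykyi\circ\dot\gamma\right)(\gamma^k)''=0 .
\]
Here lies the crux, and the point where the argument departs from Theorem~A: the matrix $\tfrac12\pprc{\Fcc}ykyi$ is the fundamental tensor, which is positive definite by strong convexity, hence invertible. Thus $(\gamma^k)''=0$ for every $k$, so $u\circ\gamma(t)=ta+b$ is an affinely parametrized straight line, and in the \emph{same} parameter $t$ — that is, $f(t)=t$ — which is precisely parameter preservation.

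The main obstacle is conceptual rather than computational. In Theorem~A the radial degeneracy $\pprc{\Fc}ykyi y^k=0$ of the Hessian of $\Fc$ annihilated the second-order term for \emph{any} reparametrization, so only the rectilinear shape of the image curve was forced. Passing to $\Fcc$ removes this degeneracy — the fundamental tensor has full rank $n$ — so the second-order term can no longer be discarded and instead forces $(\gamma^k)''=0$ outright, pinning down the affine parameter. Getting this dichotomy right, and correctly invoking the constant-speed property so that \eqref{eq:27} is available along every geodesic, is the part that needs care.
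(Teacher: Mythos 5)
Your proof is correct and takes essentially the same route as the paper's: necessity by substituting $(\gamma^k)''=0$ into the Euler--Lagrange system \eqref{eq:27} for $\Fcc$ and invoking the homogeneity identity \eqref{eq:30}, sufficiency by using \eqref{eq:29} together with \eqref{eq:30} to reduce \eqref{eq:27} to $\bigl(\pprc{\Fcc}ykyi\circ\dot\gamma\bigr)(\gamma^k)''=0$ and then exploiting the non-degeneracy of the Hessian of $\Fcc$ to force $(\gamma^k)''=0$. The only difference is that you make explicit some steps the paper leaves implicit (the sweep of initial velocities over $\mathring TM$, and that only the forward implication geodesic $\Rightarrow$ \eqref{eq:27} is needed), which is fine.
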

\bigskip

\begin{proof}
If $(M,\Fc)$ is projectively flat in a parameter-preserving manner, then there exists a coordinate system $(u^i)\en$, such that any geodesics $\gamma$ is of the form $u\circ \gamma(t)=ta+b$ for some $a,b\in\RR^n$, $a\neq0$. Then the component functions of $\gamma$ are affine, that is $(\gamma^i)''=0$, and the same argument as in the proof of Theorem A yields
\[
\frac{\pa \Fcc}{\pa x^i}=\frac{\paa \Fcc}{\pa x^k\pa y^i}y^k.
\]
However, by the $2^+$-homogeneity of $\Fcc$, we have
\begin{equation}
\frac{\pa \Fcc}{\pa x^i}=\frac 12 \, \frac{\paa \Fcc}{\pa x^i\pa y^k}y^k, \label{eq:30} \end{equation}
which gives \eqref{eq:29}.

\smallskip
Conversely, if one has \eqref{eq:29}, then \eqref{eq:30} and \eqref{eq:27} yield
\begin{equation}
\left(\frac{\paa\Fcc}{\pa y^i\pa y^k}\circ\dot\gamma\right){\gamma^k}''=0.\label{eq:31}
\end{equation}
Since $\left(\frac{\paa\Fcc}{\pa y^i\pa y^k}\right)$ is non-degenerate, it follows that ${\gamma^k}''=0$, thus the component functions of $\gamma$ are of the form $\gamma^k(t)=ta^k+b^k$, hence $u\circ \gamma(t)=ta+b$.
\end{proof}

{\bf 2.} In our last theorem we consider a parameter preserving projectively flat Finsler space $(\RR^n,\Fc)$.
\begin{thm}\label{thm:5}
A Finsler space $(\RR^n,\Fc)$ is projectively flat in a parameter-preserving manner if, and only if, it is a Minkowski space.
\end{thm}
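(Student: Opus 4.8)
The statement is an equivalence, so the plan is to treat the two implications separately; sufficiency is immediate from the theorems already proved, while necessity carries the real content. Throughout, I take a \emph{Minkowski space} over $\RR^n$ to be a Finsler space for which there is a global coordinate system $(u^i)\en$ in which $\Fc$ does not depend on the position, i.e.\ $\pa\Fc/\pa x^i\equiv 0$. For sufficiency I would argue as follows: if $(\RR^n,\Fc)$ is Minkowski, then in the distinguished coordinates $\pa\Fc/\pa x^i=0$, hence $\pa\Fcc/\pa x^i=2\Fc\,\pa\Fc/\pa x^i=0$ and every mixed derivative $\pa^2\Fcc/\pa x^k\pa y^i$ vanishes. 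Thus \eqref{eq:29} holds trivially ($0=\tfrac12\cdot 0$), and Theorem~\ref{thm:4} gives that $(\RR^n,\Fc)$ is projectively flat in a parameter-preserving manner.

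For necessity, assume $(\RR^n,\Fc)$ is parameter-preserving projectively flat. By Theorem~\ref{thm:4} there is a coordinate system $(u^i)\en$ in which every geodesic is affinely parametrized, $u\circ\gamma(t)=ta+b$. I would extract $\pa\Fc/\pa x^i=0$ from two \emph{independent} relations valid in these coordinates. First, such a curve is in particular a straight line, so $(\RR^n,\Fc)$ is projectively flat in the same coordinates and Theorem~A yields the Hamel equation \eqref{eq:Ham}; since $\pa\Fc/\pa x^i$ is $1$-homogeneous in $y$, Euler's relation $y^k\,\pa^2\Fc/(\pa x^i\pa y^k)=\pa\Fc/\pa x^i$ turns \eqref{eq:Ham} into
\[
y^k\frac{\pa^2\Fc}{\pa x^k\pa y^i}=\frac{\pa\Fc}{\pa x^i}.
\]
Second, a geodesic has constant speed; along $u\circ\gamma(t)=ta+b$ the velocity has the constant components $a$, so $\Fc(ta+b,a)$ is constant in $t$, whence $a^k(\pa\Fc/\pa x^k)(ta+b,a)=0$. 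As there is such a geodesic through every point in every direction, this gives the pointwise identity $y^k\,\pa\Fc/\pa x^k\equiv 0$ on $TM$, and differentiating it with respect to $y^i$ produces
\[
y^k\frac{\pa^2\Fc}{\pa x^k\pa y^i}=-\frac{\pa\Fc}{\pa x^i}.
\]

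Comparing the two displayed relations forces $\pa\Fc/\pa x^i=0$ for every $i$; hence $\Fc$ is position-independent in the coordinates $(u^i)\en$, i.e.\ $(\RR^n,\Fc)$ is a Minkowski space. (One may run necessity entirely through $\Fcc$ instead: combining \eqref{eq:29} with the $2^+$-homogeneity relation \eqref{eq:30} gives $y^k\,\pa^2\Fcc/(\pa x^k\pa y^i)=\pa\Fcc/\pa x^i$, while constant speed gives $y^k\,\pa\Fcc/\pa x^k\equiv 0$ and, after differentiation, $y^k\,\pa^2\Fcc/(\pa x^k\pa y^i)=-\pa\Fcc/\pa x^i$; the two again force $\pa\Fcc/\pa x^i=0$.)

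The one genuinely nontrivial point, and the step I expect to be the main obstacle to get right, is that the Hamel equation by itself does \emph{not} force position-independence: it only makes the straight lines extremals of the length functional. The decisive extra ingredient is the constant-speed property, and it becomes usable precisely because the parametrization is affine, which is what separates the present situation from ordinary projective flatness, where $f(t)\neq t$ and the speed argument yields only a weaker relation. The role of the base being $\RR^n$ is to make \emph{Minkowski space} a meaningful global notion: the coordinate system $(u^i)\en$ is global, so the local conclusion $\pa\Fc/\pa x^i=0$ upgrades to the statement that the entire space is Minkowskian. The only care needed is to guarantee, via the standard existence theorem for the geodesic spray, that geodesics through every point in every direction are available, so that the identity holding along individual geodesics indeed propagates to an identity on all of $TM$.
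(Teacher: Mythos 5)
Your proof is correct and takes essentially the same approach as the paper: sufficiency is the trivial verification of \eqref{eq:29} via Theorem~\ref{thm:4}, and for necessity both you and the paper derive the identity $y^k\,\partial\Fc/\partial x^k=0$ on $T\RR^n$ from constant speed along affinely parametrized geodesics, differentiate it with respect to $y^i$, and combine the result with the Hamel equation \eqref{eq:Ham} and Euler's homogeneity relation to force $\partial\Fc/\partial x^i=0$. The only cosmetic difference is that you display the two resulting relations and compare them, whereas the paper substitutes one into the other to obtain $2\,\partial\Fc/\partial x^i=0$.
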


\begin{proof}
Let $(u^i)$ be the canonical coordinate system on $\RR^n$, and $((x^i),(y^i))$ the induced coordinate system on $T\RR^n$.
\medskip

\noindent\emph{Necessity.} Suppose that $(\RR^n,\Fc)$ is Minkowski space, that is, the metric function $\Fc\colon\RR^n\times\RR^n\to\RR$ is independent of its first $n$ variable.
Then $\prc\Fc xi=0$ and \eqref{eq:29} holds trivially (both sides vanish). Hence $(\RR^n,\Fc)$ is projectively flat in a parameter-preserving manner.
\medskip

\noindent\emph{Sufficiency.}
Let $\gamma\colon\RR\to\RR^n$ be a geodesic of $(\RR^n,\Fc)$. Geodesics have constant speed, hence the function $\Fc\circ\dot\gamma$ is constant, therefore
\begin{equation}\label{eq:const}
0=\frac d{dt}(\Fc\circ\dot\gamma)=\left(\prc\Fc xk\circ\dot\gamma\right){\gamma^k}'+\left(\prc\Fc yk\circ\dot\gamma\right){\gamma^k}''
\end{equation}
By condition, $\gamma(t)=ta+b$ ($t\in\RR$) for some $a,b\in\RR^n$, $a\neq0$, hence the functions ${\gamma^k}''$ vanish. Therefore \eqref{eq:const} reduces to $0=\left(\prc\Fc xk\circ\dot\gamma\right){\gamma^k}'$. Since $\dot\gamma$ can be any vector in $T\RR^n$, we can write $0=\prc\Fc xk y^k$. Differentiating this relation with respect to $y^i$, we obtain
\[
0=\pprc\Fc yixk y^k+\prc \Fc xi.
\]
Taking into account that $(\RR^n,\Fc)$ is projectively flat, and the homogeneity of $\Fc$, we have
\[
0=\pprc\Fc yixk y^k+\prc \Fc xi\overset{\eqref{eq:Ham}}=\pprc\Fc ykxi  y^k+\prc \Fc xi=2\prc\Fc xi,
\]
therefore $(\RR^n,\Fc)$ is Minkowskian.
\end{proof}

\begin{cor}
  A Finsler space $(\RR^n,\Fc)$ is a Minkowski space if and only if \eqref{eq:29} is satisfied.
\end{cor}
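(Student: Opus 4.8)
The plan is to obtain the corollary as an immediate consequence of Theorems~\ref{thm:4} and~\ref{thm:5}, the only subtlety being the coordinate system in which \eqref{eq:29} is asserted. I would read the statement in the canonical coordinate system $(u^i)\en$ on $\RR^n$, with induced coordinates $((x^i)\en,(y^i)\en)$ on $T\RR^n$ --- the same frame used throughout the proof of Theorem~\ref{thm:5}.

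For the forward implication I would argue directly rather than chaining the equivalences. If $(\RR^n,\Fc)$ is Minkowski, then $\Fc$, and hence $\Fcc$, is independent of the position variables, so $\frac{\pa \Fcc}{\pa x^i}=0$ and every mixed second derivative $\frac{\paa \Fcc}{\pa x^k\pa y^i}$ vanishes. Both sides of \eqref{eq:29} are then identically zero, so \eqref{eq:29} holds. This is exactly the computation carried out in the necessity part of the proof of Theorem~\ref{thm:5}.

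For the converse I would invoke the two theorems in turn. Assume \eqref{eq:29} holds in the canonical coordinate system. Since $\RR^n$ carries this single global chart, the canonical system is itself a witness for the existence clause of Theorem~\ref{thm:4}, so $(\RR^n,\Fc)$ is projectively flat in a parameter-preserving manner; Theorem~\ref{thm:5} then yields that $(\RR^n,\Fc)$ is a Minkowski space, and the equivalence is complete.

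The one point to watch --- and where a careless argument would slip --- is the quantifier in Theorem~\ref{thm:4}: it asserts the existence of \emph{some} rectilinear coordinate system, whereas the corollary pins \eqref{eq:29} to the canonical one. The forward direction sidesteps this by producing \eqref{eq:29} in the canonical system by direct computation, not merely in some system, while the converse sidesteps it because the canonical system is handed to us as the witness by hypothesis. No further work is required.
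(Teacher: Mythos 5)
Your proof is correct and matches the paper's intended derivation: the corollary is stated without a separate proof precisely because it follows by combining Theorem~\ref{thm:4} with Theorem~\ref{thm:5}, which is exactly what you do. Your extra care with the coordinate-system quantifier --- verifying \eqref{eq:29} in the canonical chart directly (the same computation as in the necessity part of the proof of Theorem~\ref{thm:5}) rather than appealing to the mere existence statement of Theorem~\ref{thm:4} --- is sound and resolves the only subtlety.
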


\begin{cor}
  A Riemann space $(\RR^n,g)$ is a Euclidean space if and only if
  \[
  \pprc{(g_{kj}y^k y^j)}xsyi y^s-\pprc{(g_{kj}y^k y^j)}xiys y^s.
  \]
\end{cor}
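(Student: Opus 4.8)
The plan is to read this off from the preceding corollary, which asserts that $(\RR^n,\Fc)$ is a Minkowski space if and only if \eqref{eq:29} holds. For a Riemann space $(\RR^n,g)$ the Finsler function is $\Fc=\sqrt{g_{ij}y^iy^j}$, so that $\Fcc=g_{kj}y^ky^j$, and this is a genuine Finsler function (regular, $1^+$-homogeneous, strongly convex) as soon as $g$ is positive definite, so the whole machinery applies. The decisive remark is that such an $\Fc$ is a Minkowski function, i.e.\ $\Fcc$ does not depend on the position variables $x$, precisely when the $g_{ij}$ are constant in the canonical chart, and a Riemannian metric with constant coefficients on $\RR^n$ is exactly a Euclidean one. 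Thus the asserted equivalence is nothing but the preceding corollary specialized to $\Fcc=g_{kj}y^ky^j$.

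First I would substitute $\Fcc=g_{kj}y^ky^j$ into \eqref{eq:29}, using the homogeneity identity $\pa(g_{kj}y^ky^j)/\pa y^i=2g_{ij}y^j$, which rests on the symmetry $g_{ij}=g_{ji}$. The two mixed second derivatives then collapse to first derivatives of the $g_{ij}$, and \eqref{eq:29} turns into the relation displayed in the corollary. With this computation in hand the statement follows at once from the Minkowski corollary, and the converse direction (constant $g$ makes every derivative, hence both sides, vanish) is immediate.

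For a self-contained proof that does not invoke the Minkowski corollary, I would instead argue directly that the displayed condition, read as an identity in $y^1,\dots,y^n$, forces $g$ to be constant. After inserting the homogeneity identity, \eqref{eq:29} reads $2(\pa_k g_{ij})y^jy^k=(\pa_i g_{kj})y^jy^k$; since $y^jy^k$ is symmetric in $j,k$, equating the symmetric parts of the coefficients gives $\pa_k g_{ij}+\pa_j g_{ik}=\pa_i g_{jk}$ for all $i,j,k$, i.e.\ the vanishing of the Levi-Civita Christoffel symbols of the first kind, $\pa_j g_{ik}+\pa_k g_{ij}-\pa_i g_{jk}=0$. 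The only real step is then the classical cyclic manipulation: adding this identity to its cyclic permutation $\pa_i g_{jk}+\pa_k g_{ij}=\pa_j g_{ik}$ and using the symmetry of $g$ yields $2\pa_k g_{ij}=0$, so $g$ is constant and $(\RR^n,g)$ is Euclidean.

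I expect no genuine obstacle, the result being a direct specialization. The only places demanding care are the homogeneity bookkeeping in the first step, namely keeping track of the numerical factors produced by differentiating the quadratic form $\Fcc$, which is exactly where the Riemannian case differs in appearance from the Hamel equation \eqref{eq:Ham} for $\Fc$ itself, and, in the self-contained version, the legitimate passage from an identity in $y$ to a symmetric condition on the $\pa g_{ij}$ before the cyclic argument is applied.
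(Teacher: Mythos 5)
Your proposal is correct, and it actually supplies what the paper omits: the paper states this corollary with no proof at all, presenting it as an immediate specialization of the preceding corollary ($(\RR^n,\Fc)$ is Minkowski $\Leftrightarrow$ \eqref{eq:29}) together with the observation that a Riemannian metric is Minkowskian precisely when the $g_{ij}$ are constant, i.e., Euclidean. Your first paragraph is exactly that intended route. Your self-contained argument --- symmetrizing the coefficient identity in $y$, recognizing the vanishing of the Christoffel symbols of the first kind, and the index-swap trick yielding $2\pa_k g_{ij}=0$ --- is a genuinely different and more elementary proof that bypasses Theorems \ref{thm:4} and \ref{thm:5} (and the geodesic theory behind them) entirely; what it buys is that the equivalence ``PDE $\Leftrightarrow g$ constant'' becomes explicit and purely algebraic, rather than being routed through the behaviour of geodesics.

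One point you handle correctly deserves emphasis: the formula printed in the corollary is defective --- it is not even an equation (there is no ``$=0$''), and it lacks the factor $\frac12$ appearing in \eqref{eq:29}. The true specialization of \eqref{eq:29} to $\Fcc=g_{kj}y^ky^j$ is, as you write, $2(\pa_k g_{ij})y^jy^k=(\pa_i g_{kj})y^jy^k$, and it is precisely this factor $2$ that makes your cyclic argument close up and force $\pa_k g_{ij}=0$. Had you instead read the printed expression literally as the Hamel equation \eqref{eq:Ham} applied to $\Fcc$, i.e.\ $\pprc{\Fcc}xsyi y^s=\pprc{\Fcc}xiys y^s$, symmetrization would give only $\pa_s g_{ij}+\pa_j g_{is}=2\pa_i g_{js}$, which is equivalent to total symmetry of $\pa_i g_{jk}$ in all three indices, i.e., to $g$ being locally a Hessian metric $g_{ij}=\pa_i\pa_j\phi$; such metrics are in general not even flat, let alone Euclidean. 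So the ``homogeneity bookkeeping'' you flag at the end is not pedantry: it is the difference between a true statement and a false one, and your corrected reading of the displayed condition is the one under which the corollary actually holds.
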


{\bf 3.} Does there exists a theorem similar to Theorem \ref{thm:5} such that $\Fc$ is continuous only? We give a positive answer. In both parts of the proof of Theorem \ref{thm:5} we used the Euler\,--\,Lagrange equations, which require $\Fc$ to be of class $C^2$, and so they can not be applied now.

To arrive at an appropriate generalization, consider a function ${}^*\Fc\colon\RR^n\times\RR^n\to\RR$ satisfying the following conditions:
\renewcommand\labelenumi{(\roman{enumi})}
\begin{enumerate}
  \item ${}^*\Fc$ is continuous, non-negative and ${}^*\Fc(p,v)$ is positive except if $v=0$.
  \item ${}^*\Fc$ is absolutely homogeneous in its second variable, i.e., ${}^*\Fc(p,\lambda v)=|\lambda|{}^*\Fc(p,v)$ for all $\lambda\in\RR$;
  \item ${}^*\Fc(p_,\cdot)$ is convex for all fixed $p_0\in\RR^n$;
  \item[(C)] along any parametrized straight line $g(t)=ta+b$ ($t\in\RR$) the limit
\[
  {}^*\Fc_g(v):=\lim_{t\to\infty}{}^*\Fc(g(t),v),
\]
exists, and the function ${}^*\Fc_g$ has the properties (i)--(iii).
\end{enumerate}
Then we say that ${}^*\Fc$ is a \emph{weak Finsler function}, and $(\RR^n,{}^*\Fc)$ is a \emph{weak Finsler space}. If
${}^*\Fc$ does not depend on its first argument, then condition (C) holds automatically, and $(\RR^n,{}^*\Fc)$ becomes a
Minkowski space in the classical sense (\cite{Tho}, p.\ 15). Keeping these in mind, we have the following generalization of Theorem \ref{thm:5}:

\begin{thm}\label{thm:6}
 $(\RR^n,{}^*\Fc)$ is a Minkowski space if, and only if, it is projectively flat in a parameter preserving manner.
\end{thm}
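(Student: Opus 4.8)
The plan is to run both implications through the intrinsic distance $d(p,q):=\inf_\gamma\int {}^*\Fc(\gamma,\dot\gamma)$ attached to ${}^*\Fc$, whose good behaviour (finiteness, the triangle inequality, and the existence of the directional limit norms) is exactly what the defining conditions (i)--(iii) and (C) are there to guarantee. The point is that, since ${}^*\Fc$ is only continuous, none of the Euler--Lagrange reasoning of Theorem~\ref{thm:5} is available, so the whole argument must be metric rather than differential.

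For the necessity I would argue directly. If ${}^*\Fc(p,v)={}^*\Fc(v)$ is independent of $p$, then ${}^*\Fc$ is a genuine norm, and convexity (iii) together with absolute homogeneity (ii) makes it sublinear, so the integral triangle inequality gives
\[
\int_0^1{}^*\Fc(\dot\gamma)\,dt\ge {}^*\Fc\Bigl(\int_0^1\dot\gamma\,dt\Bigr)={}^*\Fc(q-p)
\]
for every path $\gamma$ from $p$ to $q$. Hence the affinely parametrised segment $t\mapsto ta+b$ realises the distance and has constant speed ${}^*\Fc(a)$; thus $(\RR^n,{}^*\Fc)$ is projectively flat in a parameter-preserving manner, and (C) holds trivially, as already observed in the excerpt.

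The substance is the sufficiency. I would assume that the straight lines $t\mapsto ta+b$ are the constant-speed geodesics. Constancy of the speed says $t\mapsto{}^*\Fc(ta+b,a)$ is constant for every $a\ne0$ and $b$; equivalently, for every $p$ and every vector $v$,
\[
{}^*\Fc(p+sv,v)={}^*\Fc(p,v)\qquad(s\in\RR).
\]
Because these segments also minimise length, integrating the (now constant) speed along the segment from $p$ to $q$ yields the clean formula $d(p,q)={}^*\Fc(p,q-p)$. The key step is then to squeeze the two position-values of ${}^*\Fc$ against each other with the triangle inequality and a receding point: fixing $p,q$ and a direction $w$, and setting $R_T:=p+Tw$, the inequality $d(p,R_T)\le d(p,q)+d(q,R_T)$ becomes
\[
T\,{}^*\Fc(p,w)\le {}^*\Fc(p,q-p)+{}^*\Fc\bigl(q,\,Tw-(q-p)\bigr).
\]
Dividing by $T$ and letting $T\to\infty$, the bounded term drops out while homogeneity and continuity turn the last term into ${}^*\Fc(q,w)$, giving ${}^*\Fc(p,w)\le{}^*\Fc(q,w)$; interchanging $p$ and $q$ (sending a point to infinity from $q$ instead) gives the reverse inequality, so ${}^*\Fc(p,w)={}^*\Fc(q,w)$ for all $p,q,w$, i.e.\ $(\RR^n,{}^*\Fc)$ is Minkowski.

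I expect the main obstacle to be the careful justification, in the merely continuous setting, of the two facts used above: that the straight segments actually minimise (so that $d(p,q)={}^*\Fc(p,q-p)$) and that $d$ obeys the triangle inequality. These are metric-geometric statements, and it is precisely here that condition~(C)—ensuring that the directional limits ${}^*\Fc_g$ are honest Minkowski norms and that the distance is well-behaved at infinity—does its work. Once these are secured, the asymptotic sandwich above is an elementary limiting computation.
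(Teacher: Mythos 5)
Your proposal is correct at essentially the same level of rigour as the paper's own argument, but it takes a genuinely different route in both directions. For necessity, where the paper shows that straight lines minimize by a bisection argument---translation-invariant geodesic spheres osculating at the midpoint, producing an everywhere dense set of points at which the distance $\varrho$ is additive---you get the same conclusion in one stroke from sublinearity of the norm,
\[
\int_0^1{}^*\Fc(\dot\gamma)\,dt\ \ge\ {}^*\Fc\Bigl(\int_0^1\dot\gamma\,dt\Bigr),
\]
which is cleaner and yields global minimality of segments directly. For sufficiency, the paper's proof is the rotating-line argument: a geodesic line $\ell$ through $q$ meeting $g_0$ at $p$ is turned until it becomes parallel to $g_0$, and condition (C) is precisely what guarantees $a(\ell)\to a(g_0)$ as $p$ recedes to infinity, whence the indicatrices at $p_0$ and $q$ coincide. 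You instead work with the induced distance: from $d(x,y)={}^*\Fc(x,y-x)$ and the triangle inequality against a receding point $p+Tw$, dividing by $T$ and letting $T\to\infty$ kills the bounded term and, by homogeneity and continuity in the fiber variable, gives ${}^*\Fc(p,w)\le{}^*\Fc(q,w)$, hence equality by symmetry. This is more elementary, and---contrary to your closing remark---it never actually invokes (C): the limit you take has the base point frozen at $q$, so only property (i) enters, and the triangle inequality is automatic for a path-infimum distance. So your route, if completed, would show that (C) is consumed only by the foundational facts, not by the squeeze itself, whereas in the paper (C) is the crux of the sufficiency proof.

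The one substantive caveat is an assumption your argument shares with the paper, so I do not count it as a gap peculiar to you: the existence of constant-speed shortest paths joining arbitrary pairs of points. The hypothesis only says that every geodesic is an affinely parametrized line; to obtain your two working facts---that $t\mapsto{}^*\Fc(ta+b,a)$ is constant and that $d(p,q)={}^*\Fc(p,q-p)$---you must know that \emph{some} minimizer from $p$ to $q$ exists, which is then, by hypothesis, the affine segment. The paper presupposes exactly the same thing when it posits unit-speed geodesic lines through arbitrary points in the needed directions. In the merely continuous setting this existence is a genuine analytic point (lower semicontinuity of length plus an Arzel\`a--Ascoli compactness argument, with care to keep minimizing sequences in a bounded region), and it is not what condition (C) provides, so your suggestion that (C) ``does its work'' there is a misattribution. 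A parallel shared subtlety occurs in the necessity direction: both you and the paper prove that lines are geodesics, which yields that \emph{all} geodesics are lines only when minimizers are unique (e.g., under strict convexity of the norm).
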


\begin{proof}
  \emph{Necessity.} Let $(\RR^n,{}^*\Fc)$ be a Minkowski space, and let $g(t)=ta+b$ be a parameterized straight line in $\RR^n$. We claim that $g$ is a geodesic. Let $p_1,p_2$ be points on $g$, and $q_1$ a further a point on $g$ such that
  \[
  \varrho(p_1,q_1)=\varrho(q_1,p_2)
  \]
  where $\varrho$ is the distance in $(\RR^n,{}^*\Fc)$. Let us observe that in a Minkowski space, the metric function ${}^*\Fc$ is translation invariant, i.e., ${}^*\Fc(p_1,v)={}^*\Fc(p_2,v)$. Therefore the symmetric geodesic spheres $C_{p_1}(r_1)$ and $C_{p_2}(r_1)$ centered at $p_1$ and $p_2$ resp., having radius $r_1=\varrho(p_1,q_1)$, are congruent (by translation), and they osculate at $q_1$:
  \[
  C_{p_1}(r_1)\cap C_{p_2}(r_1)=q_1.
  \]
  Therefore
  \[
  \varrho(p_1,q_1)+\varrho(q_1,p_2)=\varrho(p_1,p_2).
  \]
  This means that the point $q_1$ on $g$ is a point of the extremal joining $p_1$ and $p_2$. Repeating this process for the segments $p_1,q_1$ and $q_1,p_2$ we obtain further two points $q_2,q_3$ on $g$, which are also points of the extremal. Continuing this process we obtain an everywhere dense set of points in the extremal between $p_1$ and $p_2$. Thus the segment of $g$ between its two arbitrary points is an extremal. However, $g$ has constant velocity vector $g'(t)=a$, thus it also has constant speed, because ${}^*\Fc$ is translation invariant. Therefore $g$ is a geodesic of $(\RR^n,{}^*\Fc)$. Since $g$ was an arbitrary straight line in $\RR^n$, we obtain that the given Minkowski space is projectively flat in a parameter preserving manner.
  \medskip

  \noindent\emph{Sufficiency.} Suppose that $(\RR^n,{}^*\Fc)$ is projectively flat in a parameter-preserving manner. Then the geodesics are of the form $g(t)=ta+b$ ($t\in\RR$), where $a\in\RR^n$ is the constant velocity vector of the geodesic. The vector $a$ depends on $g$ alone, and we denote it by $a(g)$.

  Define the indicatrix of ${}^*\Fc$ at a point $p\in\RR^n$ as the set ${}^*\Ic(p):=\{v\in\RR^n\mid {}^*\Fc(p,v)=1\}$. In the remainder of the proof, by a straight line we mean a geodesic with constant speed $1$, more precisely a curve $g\colon\RR\to\RR^n$ of the form $g(t)=ta+b$ satisfying ${}^*\Fc(g(t),g'(t))=1$. It follows that for any straight line $g$ and for any of its points $p$, we have $a(g)\in {}^*\Ic(p)$.

  Let $p_0$ be an arbitrary point of $\RR^n$, and $g_0$ a straight line through it. Let $q$ be an arbitrary point of $\RR^n$ outside the image of $g_0$, and $g^*$ a straight line through $q$ parallel to $g_0$. Finally let $\ell$ be a straight line through $q$ intersecting $g_0$ in $p$.
  \begin{figure}[!h]
  \begin{center}
  \includegraphics{abra14pt}
  \end{center}
  \end{figure}

  \noindent Let us turn $\ell$ around $q$ until it coincides with $g^*$ (as a point set). Then $p$ tends to infinity along $g_0$. Denoting these `convergences' by the arrow $\to$, we have $a(\ell)\to a(g^*)$, and by (C), also $a(\ell)\to a(g_0)$. Consequently $a(g_0)=a(g^*)$. Since $a(g_0)\in {}^*\Ic(p_0)$ and $a(g^*)={}^*\Ic(q)$, and the direction of $g_0$ can be arbitrary, we obtain that the indicatrices ${}^*\Ic(p_0)$ and ${}^*\Ic(q)$ coincide (by translation). The point $q$ was also arbitrary, hence all the indicatrices are the same, which means that $(\RR^n,{}^*\Fc)$ is Minkowskian.
\end{proof}

\end{document}